\numberwithin{equation}{section}
\numberwithin{figure}{section}
\theoremstyle{plain}
\newtheorem{theorem}{Theorem}[section]
\newtheorem{lemma}[theorem]{Lemma}
\newtheorem{lem}[theorem]{Lemma}
\newtheorem{proposition}[theorem]{Proposition}
\newtheorem{corollary}[theorem]{Corollary}
\newtheorem{cor}[theorem]{Corollary}
\theoremstyle{definition}
\newtheorem{observation}[theorem]{Observation}
\newtheorem{remark}[theorem]{Remark}
\newtheorem{rmk}[theorem]{Remark}
\newtheorem{example}[theorem]{Example}
\newtheorem{definition}[theorem]{Definition}
\newtheorem{definitions}[theorem]{Definitions}
\let\@fnsymbol\@alph
\newcommand{\Z}{\mathbf{Z}}
\newcommand{\R}{\mathbf{R}}
\newcommand{\Q}{\mathbf{Q}}
\newcommand{\N}{\mathbf{N}}
\newcommand{\bs}{\smallsetminus}
\newcommand{\abs}[1]{\left\lvert#1\right\rvert}
\newcommand{\ceil}[1]{\left\lceil#1\right\rceil}
\newcommand{\gen}[1]{{\left\langle#1\right\rangle}}
\newcommand{\denom}[1]{\ulcorner#1\lrcorner}
\newcommand{\segment}{\overline{pq}}
\newcommand{\fig}[1]{Figure~\ref{fig:#1}}
\newcommand{\s}{\sigma}
\renewcommand{\l}{\ell}
\renewcommand{\i}{{}^{-1}}
\newcommand{\latin}[1]{\textsl{#1}}
\newcommand{\defn}[1]{\textbf{#1}}
\newcounter{note}
\newcommand{\tumble}[1]{#1}
\renewcommand{\r}{\tumble{r}}
\renewcommand{\u}{\tumble{u}}
\title{Geodesic trajectories on regular polyhedra}
\begin{document}

\author{Diana Davis\thanks{Mathematics Department, Northwestern University, Evanston, IL 60208, USA; \texttt{diana@math.northwestern.edu}} \and
Victor Dods\thanks{Department of Mathematics, University of California Santa Cruz, Santa Cruz, CA 95064, USA; \texttt{vdods@ucsc.edu}; partially supported by the 2011--2012 ARCS Fellowship.} \and
Cynthia Traub\thanks{Department of Mathematics $\&$ Statistics, Southern Illinois University Edwardsville, Edwardsville, IL 62026, USA; \texttt{cytraub@siue.edu}} \and
Jed Yang\thanks{School of Mathematics, University of Minnesota, Minneapolis, MN 55455, USA; \thinspace \texttt{jedyang@umn.edu}; partially supported by NSF GRFP grant DGE-0707424 and NSF RTG grant DMS-1148634.}
}
\maketitle

\begin{abstract}
Consider all geodesics between two given points on a polyhedron.
On the regular tetrahedron,
we describe all the geodesics from a vertex to a point, which could be another vertex.
Using the Stern--Brocot tree to explore the recursive structure of geodesics between vertices on a cube,
we prove, in some precise sense,
that there are twice 
as many geodesics
between certain pairs of vertices than other pairs.
We also obtain the fact that there are no geodesics that start and end at the same vertex on the regular tetrahedron or the cube.
\end{abstract}

\noindent
\textbf{Keywords:} geodesic, cube, regular tetrahedron, Stern--Brocot tree

\section{Introduction}
Geodesics on surfaces are curves that are locally shortest.
We consider the following problem:
Given a point $p$ on a polyhedron and a vertex~$v$, what are the geodesics from $p$ to~$v$?
In this paper, we focus on the cube, and in particular on vertex-to-vertex geodesics,
that is, where $p$ is also a vertex.
Even though geodesics on convex polyhedra do not pass through vertices,
it is possible for a geodesic to start and end at the same vertex (a ``geodesic loop'').
However, it turns out that this does not happen for the regular tetrahedron and the cube (see Corollaries~\ref{cor:tet-noloop} and \ref{cor:cube-noloop}, respectively),
so we consider $p$ distinct from~$v$.

\subsection{Previous results} 
Fuchs and Fuchs \cite{Fuchs} studied closed geodesics on regular polyhedra.
They give the (previously known, simple) result for the regular tetrahedron and also describe all closed geodesics on the cube and regular octahedron.

For the regular tetrahedron (\cite[\S3]{Fuchs}), they show that every geodesic is non-self-intersecting and give a simple characterization of every closed geodesic.
Since it does not intersect itself, every closed geodesic cuts the surface of the tetrahedron into two pieces.
For the cube (\cite[\S4]{Fuchs}), they show that, up to symmetry and translation, there are exactly three non-self-intersecting closed geodesics, two of which are planar.
They also show that for a self-intersecting closed geodesic, all the self intersections are perpendicular.
They exhibit all three non-self-intersecting geodesics and show several examples of longer geodesics with many self intersections.

More generally, over the past century many authors have studied straight paths on polyhedra.
Alexandrov showed in 1950 that a shortest path never passes through a vertex of positive curvature~\cite{Alexandrov}.
Similarly, Sharir and Schorr showed that on a convex polyhedron, shortest paths do not pass through vertices~\cite{SS},
although they may do so on a non-convex polyhedron~\cite{Mitchell}.
Moreover, shortest paths never self-intersect or intersect the same face more than once.
Locally, closed geodesics are shortest paths and therefore inherit some of these properties.
In particular, a closed geodesic cannot pass through a vertex.
In contrast, a geodesic loop can start and end at the same vertex,
but its ends cannot join smoothly to form a locally shortest path at the vertex.
These results are discussed in Demaine and O'Rourke's excellent comprehensive book (\cite[\S24]{DO}).

\subsection{Results presented in this paper}

In Section~\ref{sec:tetrahedron}, we characterize all of the geodesics from any point to any vertex on the regular tetrahedron. 

{\bf Theorem~\ref{thm:ptet}:} We describe the directions from a given point $p$ in which a geodesic will end at a given vertex~$v$.

{\bf Corollary~\ref{cor:tet}:} Given a pair of (necessarily distinct) vertices $v_0$ and~$v$, we give a complete description of the directions of geodesics from $v_0$ to~$v$.

In Section~\ref{sec:cubeintro}, we introduce our conventions and give basic results about geodesics on the cube.

In Section~\ref{sec:v2v}, we develop the beautiful recursive structure underlying vertex-to-vertex paths on the cube, based on the Stern--Brocot tree. We  use this structure to derive several results:

{\bf Theorem~\ref{thm:proportions}:} There are twice as many geodesics to the cube vertex at greatest distance from the starting vertex
as there are to each of the three vertices that are diagonally opposite the starting vertex along a common face,
and $1.5$ times as many as to the three adjacent vertices.
(The notion of ``twice as many'' is made rigorous below.)

{\bf Corollary~\ref{ways}:} We count the number of geodesics to a given vertex, depending on the ``depth'' in the Stern--Brocot tree.

In Section~\ref{sec:f2v}, we consider geodesics starting from a general point in a face by
associating them with line segments from a point in the unit square to a lattice point in~$\R^2$.

{\bf Lemma~\ref{latticevisibility}:} We characterize the lattice points that are reachable (``visible'') from a starting point~$p$ in a face.

{\bf Proposition~\ref{p2vsequence}:} We give an algorithm for determining the ``tumble sequence'' to a given lattice point.

\section{Basic definitions} \label{sec:basics}
Given a polyhedron,
a \defn{geodesic} is a locally shortest curve on its surface.

Our goal is:
Given a polyhedron~$S$, a distinguished vertex~$v$, and a distinguished point~$p$ (which may or may not be a vertex),
determine all of the geodesics from $p$ to~$v$. 
We consider a ray starting at~$p$, following the surface of~$S$, possibly wrapping around $S$ many times, before finally arriving at~$v$.
To do this, we \defn{unfold} the faces of~$S$ in the following way.
For concreteness, suppose that the face containing $p$ is on the $xy$-plane.
When the ray hits an edge~$e$,
we \defn{tumble} $S$ by applying the unique orientation-preserving isometry that fixes edge~$e$
and places the adjacent face of $S$ on the $xy$-plane on the other side of~$e$.%
\footnote{Think of this as rolling a polyhedral die on a tabletop without slipping.}
After the tumble, the geodesic continues on this new face.
These two segments in the $xy$-plane form a straight line segment by definition.
We continue in like manner until the geodesic hits a vertex.
In such a way, a geodesic on $S$ naturally corresponds to a straight line segment in the $xy$-plane,
contained in a strip formed by copying the faces of $S$ to the $xy$-plane as $S$ tumbles.
Note that the unfolding depends on the geodesic being considered.

In this paper, we study special cases of this polyhedron problem:
the regular tetrahedron and the cube.
These polyhedra are especially elegant because their faces tile the plane.
When either of these polyhedra is tumbled in all possible ways,
the points in the $xy$-plane touched by the vertices form a lattice~$\Lambda$.
We note that $\Lambda$ is the equilateral triangle lattice for the tetrahedron and the square lattice for the cube.
 
\begin{definition} \label{def:visible}
Given a point $p\in\R^2$,
a lattice point $q\in\Lambda$ distinct from $p$ is \defn{visible (from $p$)}
if the interior of the segment $\segment$ from $p$ to $q$ does not contain any lattice points of~$\Lambda$ (see \fig{orchard}).
\end{definition}

\begin{lemma} \label{lem:visible}
Let $S$ be the regular tetrahedron or the cube and $\Lambda$ the corresponding lattice.
A segment $\segment$ on the $xy$-plane corresponds to a geodesic of $S$ ending at a vertex if and only if $q\in\Lambda$ is a visible lattice point.
\end{lemma}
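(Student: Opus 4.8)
The plan is to prove both directions by tracking how the unfolding procedure interacts with the lattice~$\Lambda$. The one geometric input needed is that every vertex of the regular tetrahedron or the cube is a cone point of \emph{positive} curvature: at each vertex three equilateral triangles (cone angle~$\pi$) or three squares (cone angle~$3\pi/2$) meet, and in both cases the cone angle~$\theta$ satisfies $\theta < 2\pi$. Hence no locally shortest curve can pass through a vertex, because a path entering and leaving such a cone point subtends angles on the two sides that sum to~$\theta < 2\pi$, so one of them is less than~$\pi$, and the path can be strictly shortened there. (This is Alexandrov's theorem specialized to our two surfaces, and we can either cite it or give this one-line argument.)

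For the ``only if'' direction, suppose $\segment$ is the unfolding of a geodesic~$\gamma$ from~$p$ that ends at a vertex. Laying down the strip of faces as $S$ tumbles along~$\gamma$ realizes a neighborhood of~$\segment$ as part of the tiling of the plane by copies of the faces of~$S$, and by construction the corners of these tiles are exactly the nearby points of~$\Lambda$. The terminal vertex of~$\gamma$ is one such corner, and since the unfolding is determined by~$\gamma$ this corner is~$q$; thus $q\in\Lambda$. If some lattice point~$q'$ lay in the interior of~$\segment$, then $q'$ would be a corner of one of the tiles crossed by~$\segment$ (a lattice point on the segment can only occur at a tile corner), so $\gamma$ would pass through the corresponding vertex of~$S$ strictly before its endpoint, contradicting that $\gamma$ is locally shortest. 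Hence $q$ is visible from~$p$.

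For the ``if'' direction, suppose $q\in\Lambda$ is visible. Place the face containing~$p$ in the $xy$-plane, draw~$\segment$, and build the unfolding by tumbling~$S$ each time the segment leaves the current face through an edge. Visibility says the interior of~$\segment$ contains no lattice point, so the segment never reaches a tile corner before~$q$; in particular it crosses each edge it meets transversally through the interior of that edge, the tumbling sequence is well defined, and the resulting strip of faces has union containing~$\segment$ with $q$ a vertex of the final face. The corresponding curve~$\gamma$ on~$S$ is a geodesic: away from vertices, unfolding across an edge is an isometry from a neighborhood onto a flat region of the plane, so the straight segment~$\segment$ pulls back to a locally shortest curve; and by visibility $\gamma$ touches a vertex only at its endpoint, namely the vertex that maps to~$q$. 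So $\segment$ corresponds to a geodesic of~$S$ ending at a vertex.

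The main obstacle is the bookkeeping in the shared middle step, namely making rigorous the equivalence ``a lattice point lies in the interior of~$\segment$'' $\iff$ ``the associated path on~$S$ reaches a vertex before its endpoint.'' This requires knowing that the strip of faces determined by a direction is an honest sub-complex of the plane tiling generated by tumbling, so that its tile corners near~$\segment$ are precisely the relevant points of~$\Lambda$ — a fact for which the paper's identification of~$\Lambda$ with the triangular (resp.\ square) lattice does the real work. One must also rule out the degenerate case where~$\segment$ runs along an edge of the tiling rather than crossing it transversally, but this too follows from visibility, since the far endpoint of such an edge is a lattice point lying on~$\segment$.
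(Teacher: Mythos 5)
Your proof is correct and follows essentially the same route as the paper's: reconstruct the path from the segment by tumbling, observe that face corners land exactly on $\Lambda$, and use the fact that a geodesic cannot pass through a vertex to translate ``interior lattice point on $\segment$'' into ``path hits a vertex prematurely.'' The paper's version is a two-sentence sketch that delegates the no-vertex-crossing fact to the cited literature (Alexandrov, Sharir--Schorr), whereas you supply the cone-angle argument and the degenerate edge case explicitly; this is added detail, not a different method.
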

\begin{proof}
We reconstruct the corresponding geodesic given the segment on the plane:
each time the segment leaves the current face through an edge,
we tumble $S$ across that edge.
The added vertex of this new face is of course a lattice point.
If $q$ is not a lattice point, then the corresponding geodesic does not end at a vertex.
On the other hand, if $q$ is a lattice point that is not visible,
then the corresponding geodesic passes through a vertex, a contradiction.
\end{proof}
We identify the geodesics on~$S$ with the corresponding segments in the $xy$-plane (see \fig{tumblepathex}).

%

{
\section{The regular tetrahedron} \label{sec:tetrahedron}
\renewcommand{\u}{\mathbf{u}}
\renewcommand{\v}{\mathbf{v}}
\renewcommand{\a}{\bar a}
\renewcommand{\b}{\bar b}

Let $T$ be the regular tetrahedron whose edges are all of unit length.
Given a point $p$ on the surface of $T$ and a vertex $v$ of~$T$,
we wish to characterize all geodesics from $p$ to~$v$.

Label the vertices of $T$ by the elements of $(\Z/2)^2$,
and think of $T:(\Z/2)^2\to\R^3$ as the location of its four vertices.
For concreteness, suppose $T$ is initially placed with $T(0,0)=(0,0,0)$ at the origin,
$T(1,1)=(1,0,0)$ on the positive $x$-axis,
$T(1,0)=\u=\big(\frac12,\frac{\sqrt3}2,0\big)$ on the first quadrant of the $xy$-plane,
and $T(0,1)=\big(\frac 12, \frac 1{2\sqrt{3}}, \sqrt{2/3}\big)$. 
Moreover, suppose $p$ is on the $xy$-plane.

A geodesic on $T$ starting at $p$ and ending at a vertex is uniquely determined by its initial angle~$\alpha$ above the horizontal:
it corresponds to a ray in the $xy$-plane emanating from $p$ with slope $\tan\alpha$.
The question becomes:
Given a segment in the $xy$-plane,
determine whether it corresponds to a geodesic and, if so, the vertex label of its endpoint.

This follows easily from two simple lemmas.
Let $\Lambda$ be the lattice generated by
\[ \textstyle \u=\big(\frac12,\frac{\sqrt3}2,0\big)\text{\quad and\quad}\v=\big(\frac12,-\frac{\sqrt3}2,0\big). \]
A lattice point $q\in\Lambda$ can be uniquely written as $q=a\u+b\v$ for some $a,b\in\Z$.
Label $q$ with $(\a,\b)$,
where $\a,\b\in\Z/2$ are the images of $a$ and $b$, respectively, under the canonical surjection $\Z\twoheadrightarrow\Z/2$.
This gives a labeling map $\chi:\Lambda\to(\Z/2)^2$.

\begin{lemma}[Labeling] \label{lem:label}
If a segment $\segment$ on the $xy$-plane corresponds to a geodesic of $T$ ending at a vertex~$v$,
then $v$ is the label of the endpoint $q$, that is, $\chi(q)=v$.
\end{lemma}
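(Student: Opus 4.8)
The plan is to prove, by induction on the number of tumbles in the unfolding determined by the geodesic, the following slightly stronger statement: in the sequence of faces of $T$ laid down in the $xy$-plane by $\segment$, every vertex of every such face sits at a lattice point of $\Lambda$ and carries the $T$-label equal to $\chi$ of that point. Since $q$ is a visible lattice point by Lemma~\ref{lem:visible} and is a vertex of the last face of the unfolding, the lemma follows once this claim is established. For the base case I would simply observe that, before any tumble, the face of $T$ resting on the plane has its three vertices at $\mathbf{0}$, $\u$, and $\u+\v$, whose $T$-labels are $(0,0)$, $(1,0)$, and $(1,1)$; since $\chi(\mathbf{0})=(0,0)$, $\chi(\u)=(1,0)$, and $\chi(\u+\v)=(1,1)$, and these three labels are distinct, the claim holds at the start.

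For the inductive step, assume the current face $F$ has vertices at lattice points $q_1,q_2,q_3$ carrying the $T$-labels $\chi(q_1),\chi(q_2),\chi(q_3)$, which are three distinct elements of $(\Z/2)^2$. Tumbling across the edge $\overline{q_1q_2}$ produces the next face $F'$, with vertices $q_1$, $q_2$, and a new lattice point $q_3'$; the vertex of $T$ landing at $q_3'$ is the unique vertex of $T$ that is not a vertex of $F$, so its $T$-label $w$ is the unique element of $(\Z/2)^2$ distinct from $\chi(q_1),\chi(q_2),\chi(q_3)$. Two observations finish the step. First, since the triangles $q_1q_2q_3$ and $q_1q_2q_3'$ are equilateral of the same size and share the edge $\overline{q_1q_2}$, the vertex $q_3'$ is the reflection of $q_3$ across the line through $q_1$ and $q_2$, which for an equilateral triangle is exactly the point $q_1+q_2-q_3$; applying the group homomorphism $\chi\colon\Lambda\twoheadrightarrow(\Z/2)^2$ (in which $-1=1$) yields $\chi(q_3')=\chi(q_1)+\chi(q_2)+\chi(q_3)$. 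Second, the four elements of $(\Z/2)^2$ sum to $0$, so the fourth element $w$ likewise equals $\chi(q_1)+\chi(q_2)+\chi(q_3)$. Hence $\chi(q_3')=w$, the $T$-label of the vertex at $q_3'$, and since $w$ differs from $\chi(q_1)$ and $\chi(q_2)$ the three vertices of $F'$ again carry distinct labels matching $\chi$. This closes the induction.

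I do not expect a genuine obstacle: the content is the lattice identity $q_3'=q_1+q_2-q_3$ describing the mirror image of a lattice triangle across one of its edges, together with its compatibility with the additive structure of $(\Z/2)^2$ via $\chi$, and the trivial fact that the four labels sum to zero. As a byproduct the argument shows that the $T$-label arriving at a given lattice point is independent of which geodesic/unfolding is used to reach it, since the right-hand side $\chi(q)$ is defined intrinsically on $\Lambda$.
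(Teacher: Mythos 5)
Your proof is correct and takes essentially the same route as the paper: the paper checks that labels are locally consistent across any pair of adjacent lattice triangles and lets that propagate along the tumble path, which is exactly your induction on tumbles. The only difference is that you make the paper's ``easy to check'' step explicit via the reflection identity $q_3'=q_1+q_2-q_3$, the additivity of $\chi$, and the fact that the four elements of $(\Z/2)^2$ sum to zero.
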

\begin{proof}
We need only check that the labels of the lattice points are locally consistent.
First, notice that the lattice points form unit equilateral triangles.
Pick two such triangles $\Delta_1$ and $\Delta_2$ that share an edge~$e$.
It is easy to check that the four lattice points have distinct labels.
As such, if $T$ rests on top of~$\Delta_1$ such that its vertices are consistent with the labels of the lattice points,
then tumbling $T$ across $e$ will cause $T$ to sit on top of $\Delta_2$, again with consistent labels.
Since this works for any pair of adjacent triangles, we are done.
\end{proof}

Finally, we derive an algebraic criterion for the visibility of a lattice point.
Note that the starting point $p$ can be uniquely written as $p=x\u+y\v$ for some $x,y\in[0,1)$.
By rotating and relabeling $T$, without loss of generality, we shall assume that $\alpha\in(-\pi/3,\pi/3]$.
That is, only consider lattice points $q=a\u+b\v$ where $a>x$ and $b\geq y$.
A portion of the labelled lattice is shown in Figure~\ref{triangles}.

\begin{figure}[hbtp]
\begin{tikzpicture}[scale=1.5]
\pgfmathsetmacro{\xcoord}{cos(60)}
\pgfmathsetmacro{\ycoord}{sin(60)}
\coordinate (O) at (0,0);
\coordinate (u) at (\xcoord,\ycoord);
\coordinate (v) at (\xcoord,-\ycoord);

\clip (-1,-2) rectangle (7,2);

\fill [opacity=.3] (O) -- (u) -- ($(u)+(v)$);

\draw[-latex,ultra thick] (O) -- node[above left] {$\u$} (u) ;
\draw[-latex,ultra thick] (O) -- node[below left] {$\v$} (v) ;

\foreach \y in {0,...,6}
{
\foreach \x in {0,...,6}
   {
      \fill[black,opacity=.7] ($\x*(u) + \y*(v)$) circle (1.3pt);
      \path ($\x*(u) + \y*(v)$) node[right] {$(\pgfmathparse{int(mod(\x,2))} \pgfmathresult, \pgfmathparse{int(mod(\y,2))} \pgfmathresult)$};
   }
}
\end{tikzpicture}
\caption{The labeling of the triangular lattice~$\Lambda$. The starting point is in the grey region. \label{triangles}}
\end{figure}
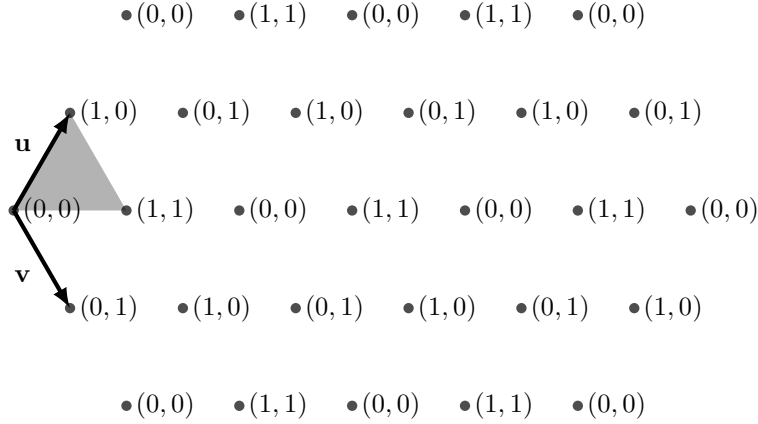  

\begin{definition} \label{denominator}
For $x\in\R$, define the \defn{denominator} $\denom x$ as 

\[ \denom x=\left\{\begin{array}{ll}0 & \text{if $x\notin\Q$,} \\ \frac1d & \text{if $x=\frac cd$, $c\in\Z$, $d\in\Z_+$, and $\gcd(c,d)=1$.}\end{array}\right. \]
\end{definition}

Note that for rational input~$x$, the value of $\denom x$ is the denominator of the lowest terms fraction representing~$x$.

\begin{lemma}[Visibility] \label{lem:denom}
For $a>x$ and $b\geq y$, the lattice point $q=a\u+b\v$ is visible from $p=x\u+y\v$ if and only if $\denom{\frac{b-y}{a-x}}\leq\frac1a$.
\end{lemma}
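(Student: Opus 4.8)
The plan is to change to the coordinate system determined by the basis $(\u,\v)$, in which $\Lambda$ becomes the standard integer lattice $\Z^2$, the point $p$ becomes $(x,y)$ with $x,y\in[0,1)$, and $q$ becomes the integer point $(a,b)$ (recall $a>x\ge 0$ forces $a\ge 1$, and $b\ge y\ge 0$). A point of the interior of $\segment$ is then $(x,y)+t\big((a,b)-(x,y)\big)$ for $t\in(0,1)$, and it is a lattice point exactly when both coordinates are integers. Since $a>x$, the first coordinate $x+t(a-x)$ equals an integer $k$ precisely when $t=\frac{k-x}{a-x}$, and the constraint $t\in(0,1)$ translates, using $x\in[0,1)$ and $a\in\Z$, into $k\in\{1,\dots,a-1\}$. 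Substituting this value of $t$ into the second coordinate, one gets that $q$ is \emph{not} visible from $p$ if and only if there exists an integer $k\in\{1,\dots,a-1\}$ with $y+(k-x)\theta\in\Z$, where $\theta:=\frac{b-y}{a-x}\ge 0$.

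The key step is the observation that $y+(a-x)\theta=b$ is already an integer; hence, for any integer $k$, the quantity $y+(k-x)\theta=b-(a-k)\theta$ lies in $\Z$ if and only if $(a-k)\theta\in\Z$. Writing $j=a-k$, this reduces the criterion to: $q$ is not visible from $p$ if and only if $j\theta\in\Z$ for some $j\in\{1,\dots,a-1\}$. A short case analysis then finishes the argument. If $\theta$ is irrational, no positive integer multiple of $\theta$ is an integer, and meanwhile $\denom\theta=0\le\frac1a$, so both conditions fail and $q$ is visible. If $\theta=\frac cd$ in lowest terms with $d\in\Z_+$, then $j\theta\in\Z$ iff $d\mid j$ (using $\gcd(c,d)=1$), so such a $j$ exists in $\{1,\dots,a-1\}$ iff $d\le a-1$, i.e.\ iff $d<a$, i.e.\ iff $\denom\theta=\frac1d>\frac1a$. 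In every case, $q$ is visible from $p$ if and only if $\denom\theta\le\frac1a$, which is the claim.

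The only genuinely non-routine move is the reduction in the second paragraph: instead of analyzing $y+(k-x)\theta$ directly, one compares it against its value at $k=a$, which is the known integer $b$; once that telescoping is in place, the remaining content is exactly the definition of $\denom\theta$. The rest is bookkeeping — verifying that $t\in(0,1)$ corresponds precisely to $k\in\{1,\dots,a-1\}$, and checking the degenerate situations $a=1$ (the index set is empty, $q$ is automatically visible, and $\denom\theta\le 1$ holds trivially) and $\theta=0$ (forced when $b=y$, hence $y=b=0$), both of which are consistent with the stated inequality.
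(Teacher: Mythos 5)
Your proof is correct and follows essentially the same route as the paper's: both reduce visibility to the existence of an integer $d$ with $0<d<a$ and $d\cdot\frac{b-y}{a-x}\in\Z$ (your $j=a-k$ is precisely the paper's $d=a-r$), and then translate that into the condition on the reduced denominator. Your write-up is simply more detailed, making explicit the interiority bookkeeping and the irrational and degenerate cases that the paper leaves implicit.
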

\begin{proof}
If $q$ is not visible, there exists $r\u+s\v$ that lies on the segment from $p=x\u+y\v$ to $q=a\u+b\v$.
That means $\frac{b-y}{a-x}=\frac{b-s}{a-r}\in\Q$.
Let $d=a-r$; since $0<r<a$, we have $0<d<a$.
Thus $\denom{\frac{b-y}{a-x}}\geq\frac1d>\frac1a$.
Conversely, suppose $\denom{\frac{b-y}{a-x}}=\frac1d>\frac1a$.
Let $c=\frac{b-y}{a-x}\cdot d\in\Z$.
Then $(a-d)\u+(b-c)\v$ is a lattice point that lies on the segment~$\segment$.
\end{proof}

\begin{corollary} \label{cor:latvis}
For $a,b\geq0$, a lattice point $q=a\u+b\v$ is visible from the origin if and only if $\gcd(a,b)=1$.
\end{corollary}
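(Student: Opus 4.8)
The plan is to obtain this as the special case of the Visibility Lemma (Lemma~\ref{lem:denom}) in which $p$ is the origin, i.e.\ $x=y=0$, after first peeling off the degenerate case $a=0$. Note that visibility from the origin is symmetric in $a$ and $b$: the reflection of $\R^2$ across the $x$-axis is an isometry that fixes the origin, preserves $\Lambda$, and interchanges $\u$ and $\v$, hence sends $a\u+b\v$ to $b\u+a\v$. So without loss of generality we may assume $a\ge b$. If $a=0$ then also $b=0$, so $q$ is the origin itself, which is not visible from the origin by Definition~\ref{def:visible}, consistently with $\gcd(0,0)=0\ne 1$. (Alternatively, one checks the edge of the region directly: for $b\ge 1$ the point $b\v$ is visible from the origin precisely when $b=1$, since otherwise $\v,2\v,\dots,(b-1)\v$ lie in the interior of the segment.)

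In the remaining range $a\ge 1$, $b\ge 0$, the hypotheses $a>x$ and $b\ge y$ of Lemma~\ref{lem:denom} are satisfied with $x=y=0$, and the lemma says that $q=a\u+b\v$ is visible from the origin if and only if $\denom{\tfrac{b}{a}}\le\tfrac1a$. The only remaining work is to evaluate $\denom{b/a}$. Writing $g=\gcd(a,b)$ (with the usual convention $\gcd(a,0)=a$), the fraction $b/a=(b/g)/(a/g)$ is in lowest terms with positive denominator $a/g$, so by Definition~\ref{denominator} we have $\denom{b/a}=g/a$. Thus the visibility criterion $g/a\le 1/a$ is equivalent to $\gcd(a,b)\le 1$, which—because $a\ge 1$—is the same as $\gcd(a,b)=1$. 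Combining with the $a=0$ case completes the proof.

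I do not anticipate any real obstacle here: given Lemma~\ref{lem:denom}, the argument is essentially a one-line denominator computation. The only point requiring a little care is bookkeeping at the boundary of the admissible region—in particular the value $a=0$, which lies outside the hypothesis $a>x$ of Lemma~\ref{lem:denom}—which is why I would dispose of it first using the $\u\leftrightarrow\v$ symmetry of the lattice.
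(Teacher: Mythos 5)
Your proposal is correct and matches the paper's intent: the paper states this corollary without proof as the immediate specialization of Lemma~\ref{lem:denom} to $p=(0,0)$, using exactly the computation $\denom{b/a}=\gcd(a,b)/a$. Your extra care with the boundary case $a=0$ (which falls outside the hypothesis $a>x$ of the lemma) is a welcome refinement but does not change the approach.
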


\begin{remark}
If we start at a point $p=x\u+y\v$ such that one of $x$ and $y$ is rational and the other is irrational, all lattice points are visible,
because $\frac{b-y}{a-x}$ is irrational for all $a,b\in\Z$.
\end{remark}


We obtain a description of point-to-vertex geodesics by using the lemmas above.

\begin{theorem} \label{thm:ptet}
Let $p=x\u+y\v$ for some $x,y\in[0,1)$.
The vectors corresponding to geodesics from $p$ to vertex $(a',b')$ are
$$\left\{ (a-x)\u+(b-y)\v    \mid \text{$a\equiv a',\ b\equiv b'\pmod2$ and $\denom{\tfrac{b-y}{a-x}}\leq\tfrac1a$} \right\}$$
with corresponding angles $\arctan\tfrac{\sqrt{3}(a-b-x+y)}{(a+b-x-y)}$.
\end{theorem}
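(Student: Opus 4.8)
The plan is to stitch together the three lemmas just established, so the argument is short. Throughout we keep the standing normalization that the initial angle $\alpha$ lies in $(-\pi/3,\pi/3]$, and, via Lemma~\ref{lem:visible}, we identify a geodesic from $p$ that ends at a vertex with the straight segment $\segment$ it traces out after unfolding, where $p=x\u+y\v$. The first step is to pin down which lattice points can occur as the far endpoint $q$ of such a segment under this angular restriction. Since $\u$ and $\v$ point in the directions $+\pi/3$ and $-\pi/3$, a nonzero vector $s\u+t\v$ has direction in the half-open cone $(-\pi/3,\pi/3]$ exactly when $s>0$ and $t\geq0$. Applying this to $q-p=(a-x)\u+(b-y)\v$ shows that the candidate endpoints are precisely the lattice points $q=a\u+b\v$ with $a>x$ and $b\geq y$, which is exactly the family singled out before the statement.

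Next I would run the three equivalences in turn. By Lemma~\ref{lem:visible}, such a segment $\segment$ corresponds to a geodesic ending at a vertex if and only if $q$ is visible from $p$; moreover the ray from $p$ through a visible $q$ meets $q$ before any other lattice point (otherwise that earlier point would lie in the interior of $\segment$, contradicting visibility of $q$), so geodesics from $p$ to vertices are in bijection with the visible lattice points in the cone, the associated vector being $q-p=(a-x)\u+(b-y)\v$. By the Labeling Lemma~\ref{lem:label}, the geodesic terminating at $q=a\u+b\v$ ends at the vertex $\chi(q)=(\bar a,\bar b)$, so it ends at the prescribed vertex $(a',b')$ precisely when $a\equiv a'$ and $b\equiv b'\pmod{2}$. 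By the Visibility Lemma~\ref{lem:denom}, for $a>x$ and $b\geq y$ the point $q$ is visible from $p$ if and only if $\denom{\frac{b-y}{a-x}}\leq\frac1a$. Conjoining these conditions yields exactly the set of vectors in the statement.

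It remains to compute the initial angle. Substituting $\u=\big(\tfrac12,\tfrac{\sqrt3}2\big)$ and $\v=\big(\tfrac12,-\tfrac{\sqrt3}2\big)$ into $(a-x)\u+(b-y)\v$ gives the Cartesian vector $\big(\tfrac{(a-x)+(b-y)}{2},\ \tfrac{\sqrt3\,((a-x)-(b-y))}{2}\big)$, whose first coordinate is positive because $a-x>0$ and $b-y\geq0$; hence the angle above the horizontal is the arctangent of the second coordinate over the first, namely $\arctan\frac{\sqrt3(a-b-x+y)}{a+b-x-y}$.

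There is no serious obstacle here — the theorem is essentially a repackaging of the lemmas — but the step that needs the most care is the bookkeeping around the normalization: one must check that the inequalities $a>x$, $b\geq y$ capture exactly the half-open cone $(-\pi/3,\pi/3]$, with the boundary ray along $\u$ included and the one along $\v$ excluded, and one should note the harmless edge cases (when $x=0$ the condition $a>x$ degenerates to $a\geq1$, similarly for $y=0$, and when $a+b-x-y=0$ the geodesic leaves vertically and the arctangent is read as $\pi/2$). One should also record that this describes the geodesics with $\alpha\in(-\pi/3,\pi/3]$, the remaining initial directions being recovered by the rotation and relabeling of $T$ used to set up the normalization.
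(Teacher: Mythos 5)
Your proposal is correct and matches the paper's (implicit) argument exactly: the paper offers no separate proof of Theorem~\ref{thm:ptet}, presenting it as the direct conjunction of Lemma~\ref{lem:visible}, the Labeling Lemma~\ref{lem:label}, and the Visibility Lemma~\ref{lem:denom}, together with the cone normalization $a>x$, $b\geq y$ and the coordinate computation of the angle, all of which you carry out. (Your worry about $a+b-x-y=0$ is vacuous, since $a-x>0$ and $b-y\geq0$ force this quantity to be positive.)
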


Theorem~\ref{thm:ptet} may not be an entirely satisfying classification, because it relies on our non-standard denominator function.
However, this theorem, along with Corollary~\ref{cor:latvis}, gives a complete description of vertex-to-vertex geodesics:

\begin{corollary} \label{cor:tet}
The starting angles of geodesics from vertex $(0,0)$ to vertex $(a',b')$ are
$$\left\{\arctan\tfrac{\sqrt3(a-b)}{a+b}\mid a\equiv a',\ b\equiv b'\pmod2,\ \gcd(a,b)=1\right\}.$$
\end{corollary}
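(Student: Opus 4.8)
The plan is to obtain Corollary~\ref{cor:tet} as the special case $x=y=0$ of Theorem~\ref{thm:ptet}, checking that each ingredient of the theorem simplifies correctly when the starting point is the vertex $(0,0)$. First I would specialize the angle formula: setting $x=y=0$ in $\arctan\tfrac{\sqrt3(a-b-x+y)}{a+b-x-y}$ immediately gives $\arctan\tfrac{\sqrt3(a-b)}{a+b}$, so the form of the expression is the one claimed. It remains only to verify that the index set over which $(a,b)$ ranges agrees with the one in the corollary.

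Next I would address the two conditions on $(a,b)$. The congruence conditions $a\equiv a'$, $b\equiv b'\pmod 2$ are identical in both statements, since the label of the lattice point $a\u+b\v$ is $(\bar a,\bar b)\in(\Z/2)^2$ by Lemma~\ref{lem:label}, and reaching vertex $(a',b')$ means exactly $\chi(a\u+b\v)=(a',b')$. For the visibility condition, the theorem requires $\denom{\frac{b-y}{a-x}}\le\frac1a$; with $x=y=0$ this is $\denom{\frac ba}\le\frac1a$. I would then invoke Corollary~\ref{cor:latvis}: a lattice point $a\u+b\v$ with $a,b\ge0$ is visible from the origin if and only if $\gcd(a,b)=1$. (Alternatively one can argue directly: writing $g=\gcd(a,b)$, the reduced fraction $\frac ba$ has denominator $\frac ag$, so $\denom{\frac ba}=\frac g a\le\frac 1a$ holds precisely when $g=1$; one must separately note the degenerate cases, e.g. $b=0$ forcing $a=1$, which is consistent with $\gcd(a,0)=a=1$.) Either way, the inequality in the theorem collapses exactly to $\gcd(a,b)=1$.

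Finally I would note the harmless bookkeeping point that Theorem~\ref{thm:ptet} was stated under the normalization $\alpha\in(-\pi/3,\pi/3]$, i.e. restricting to $a>x$, $b\ge y$, which at the vertex becomes $a>0$, $b\ge0$; combined with $\gcd(a,b)=1$ this is the honest index set, and the apparent discrepancy with writing simply ``$a,b\ge0$, $\gcd(a,b)=1$'' is cosmetic since $\gcd(0,b)=1$ forces $b=1>0$ and likewise $a=1$ when $b=0$, so no genuinely new pairs are admitted. Assembling these observations gives the stated set of starting angles. I do not expect a serious obstacle here; the only mild subtlety is the careful handling of the boundary/degenerate cases ($a=1$, $b=0$ and vice versa) to confirm that the two descriptions of the index set really do coincide, and making sure the reduction from the $\denom{\cdot}$ condition to $\gcd$ is phrased cleanly via Corollary~\ref{cor:latvis} rather than re-proved from scratch.
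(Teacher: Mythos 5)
Your proposal is correct and follows exactly the route the paper intends: the paper gives no separate proof of Corollary~\ref{cor:tet}, stating only that it follows from Theorem~\ref{thm:ptet} together with Corollary~\ref{cor:latvis}, which is precisely your specialization $x=y=0$ plus the translation of the $\denom{\cdot}$ condition into $\gcd(a,b)=1$. Your extra care with the degenerate cases ($b=0$ forcing $a=1$, and vice versa) is a harmless refinement of the same argument.
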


To end at vertex $(0,0)$, the numbers $a$ and $b$ must both be even, contradicting the $\gcd$ condition.
\begin{corollary} \label{cor:tet-noloop}
The regular tetrahedron admits no geodesic loop at a vertex,
\latin{i.e.}, a geodesic that starts and end at the same vertex.
\end{corollary}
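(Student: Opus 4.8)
The plan is to read this off from Corollary~\ref{cor:tet} via a one‑line parity argument. Since the isometry group of the regular tetrahedron acts transitively on its vertices, it suffices to rule out a geodesic that starts and ends at the single vertex $(0,0)$, with $T$ placed as in Section~\ref{sec:tetrahedron}. One then checks that the normalization ``rotate so that $\alpha\in(-\pi/3,\pi/3]$'' used to prove Corollary~\ref{cor:tet} is carried out by a rotation through a multiple of $2\pi/3$ about the vertex $(0,0)$ (together with the induced relabeling of the other three vertices), so it fixes the starting vertex and hence preserves the property of being a loop at $(0,0)$. Thus Corollary~\ref{cor:tet} applies with $(a',b')=(0,0)$.

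Now the conclusion follows: any geodesic from $(0,0)$ back to $(0,0)$ would, by Corollary~\ref{cor:tet}, correspond to integers $a,b$ with $a\equiv b\equiv 0\pmod 2$ and $\gcd(a,b)=1$; moreover $a>0$ by the reduction behind Theorem~\ref{thm:ptet} (taking $p$ to be the origin, so $x=y=0$), so $a\ge 2$. But then $2\mid\gcd(a,b)$, forcing $\gcd(a,b)\ge 2$ and contradicting $\gcd(a,b)=1$. Hence no such geodesic exists.

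There is no genuinely hard step here; the only point meriting a moment's attention is the bookkeeping in the first paragraph, namely making sure that every normalization built into the statement of Corollary~\ref{cor:tet} can still be invoked once we additionally demand that the geodesic return to where it began. Equivalently, one can bypass Corollary~\ref{cor:tet} entirely and argue directly: by Lemma~\ref{lem:label} the endpoint of such a geodesic carries the label $\chi(q)$, which equals $(0,0)$ exactly when the $\mathbf u,\mathbf v$‑coordinates of $q$ are both even, while by Corollary~\ref{cor:latvis} visibility of $q$ from the origin forces those coordinates to be coprime --- the same parity contradiction.
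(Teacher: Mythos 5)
Your proposal is correct and is essentially the paper's own argument: the paper deduces the corollary in one line from Corollary~\ref{cor:tet}, noting that ending at vertex $(0,0)$ forces $a$ and $b$ both even, contradicting $\gcd(a,b)=1$. The extra bookkeeping about the normalization and the alternative route via Lemma~\ref{lem:label} and Corollary~\ref{cor:latvis} are fine but add nothing beyond what the paper already does.
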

It can be seen in Figure~\ref{triangles} that every lattice point labelled $(0,0)$ is not visible from the origin.
}

\section{Introduction to the cube} \label{sec:cubeintro}

We define basic conventions for the cube and make some simple observations about geodesics on the cube, in preparation for our more detailed study in Section~$\ref{sec:v2v}$.

\begin{definitions} \label{basicdefs}
The \defn{starting position} of the cube has opposite vertices at $(0,0,0)$ and $(1,1,1)$.
We label the vertices $0 = (0,0,0)$, $1 = (1,0,0)$, $2 = (0,1,0)$, $3 =(1,1,0)$, $4 = (0,0,1)$, $5 = (1,0,1)$, $6 = (0,1,1)$, $7 = (1,1,1)$ (see Figure~\ref{vertexlabels}).
The vertex coordinates correspond to their labels in binary.

\begin{figure}[hbtp] 
\hfill{}\begin{tikzpicture}
  [back line/.style={densely dotted}, 
   cross line/.style={preaction={draw=white, -, line width=6pt}}
  ] 
  \matrix (m) 
    [matrix of math nodes, 
     row sep=3em, 
     column sep=3em, 
     text height=1.5ex, 
     text depth=0.25ex] 
  { 
    & 6 & & 7 \\ 
    4 & & 5 \\ 
    & 2 & & 3 \\ 
    0 & & 1 \\ 
  }; 
  \path
    (m-1-2) edge (m-1-4) edge (m-2-1) edge [back line] (m-3-2) 
    (m-1-4) edge (m-3-4) edge (m-2-3) 
    (m-2-1) edge [cross line] (m-2-3) edge (m-4-1) 
    (m-3-2) edge [back line] (m-3-4) edge [back line] (m-4-1) 
    (m-4-1) edge (m-4-3) 
    (m-3-4) edge (m-4-3) 
    (m-2-3) edge [cross line] (m-4-3)
  ;
\end{tikzpicture}\hfill{} \begin{tikzpicture}
  \node (0)               {$0$};
  \node (1)  [right=of 0] {$1$};
  \node (2)  [above=of 0] {$2$};
  \node (3)  [above=of 1] {$3$};
  \node (6)  [above=of 2] {$6$};
  \node (7u) [above=of 3] {$7$};
  \node (5)  [right=of 1] {$5$};
  \node (7r) [right=of 3] {$7$};
  \path (0) edge (1) edge (2);
  \path (3) edge (1) edge (7r) edge (7u) edge (2);
  \path (6) edge (2) edge (7u);
  \path (5) edge (1) edge (7r);
\end{tikzpicture}
\hfill{}\begin{tikzpicture}
  [back line/.style={densely dotted}, 
   cross line/.style={preaction={draw=white, -, line width=6pt}}
  ] 
  \matrix (m) 
    [matrix of math nodes, 
     row sep=3em, 
     column sep=3em, 
     text height=1.5ex, 
     text depth=0.25ex] 
  { 
    & 3 & & 7 \\
    2 & & 6 \\
    & 1 & & 5 \\
    0 & & 4 \\
  }; 
  \path
    (m-1-2) edge (m-1-4) edge (m-2-1) edge [back line] (m-3-2) 
    (m-1-4) edge (m-3-4) edge (m-2-3) 
    (m-2-1) edge [cross line] (m-2-3) edge (m-4-1) 
    (m-3-2) edge [back line] (m-3-4) edge [back line] (m-4-1) 
    (m-4-1) edge (m-4-3) 
    (m-3-4) edge (m-4-3) 
    (m-2-3) edge [cross line] (m-4-3)
  ;
\end{tikzpicture}\hfill{}
\par
\begin{quote}\caption{The vertex labels of the cube;
an unfolding of the vertices onto the plane;
the cube orientation $0312$. \label{vertexlabels}} \end{quote}
\end{figure}
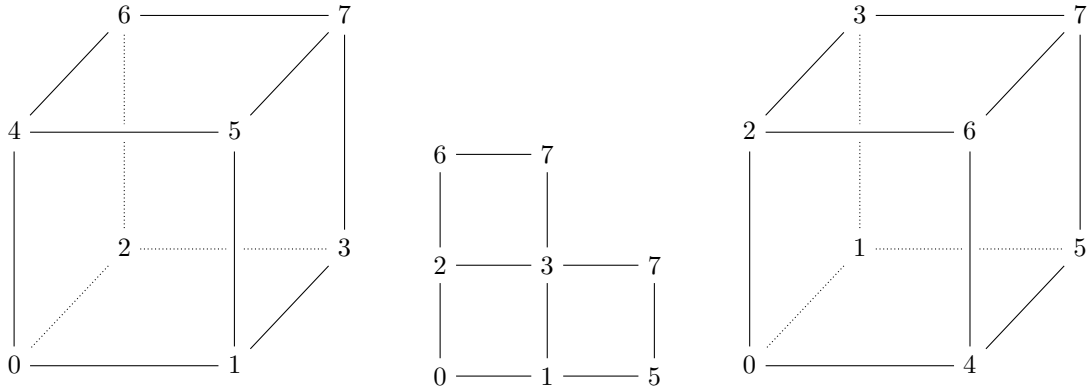

The letter $\r$ represents a \defn{right} roll,
which from the starting position puts vertices $1,3,5$ and $7$ on the $xy$-plane.
The letter $\u$ represents an \defn{up} roll,
which from the starting position puts vertices $2,3,6$ and $7$ on the $xy$-plane.
The direction of rolls are described as seen from the positive $z$-direction looking down at the $xy$-plane.
Hence a right roll is in the positive $x$-direction and an up roll is in the positive $y$-direction.

A \defn{tumble sequence} $\tau$ is a word on the alphabet $\{\r,\u\}$ representing cube rolls.
We read from left to right, so $\tumble{rrurru}$ represents rolling the cube to the right twice, then up once, then to the right twice, and then up once.
We associate $\tau$ with the lattice point $(a,b)$ touched by the upper-right corner of the bottom face of the cube at the end of the tumble,
which we assume to be in the first quadrant by symmetry.

A \defn{tumble path} is the set of squares touched by the faces of the cube as it rolls out the tumble sequence.

We label the four diagonals of the cube $0,1,2,3$ based on which bottom vertex the diagonal touches.
An orientation of the cube is uniquely determined by the permutation of the four diagonals,
so we label each cube orientation with an element in~$S_4$, written in one-line notation.
The cube orientation $\sigma_0=0123$ is the identity,
the starting position of the cube.
The orientation $0312$ in means that diagonal~$0$ is fixed,
diagonal~$3$ is in the starting position of diagonal~$1$,
diagonal~$1$ is in the starting position of diagonal~$2$,
and diagonal~$2$ is in the starting position of diagonal~$3$ (see Figure~\ref{vertexlabels}).

The symmetry group of the cube is isomorphic to~$S_4$,
the symmetric group on four letters.
Applying $r$ (respectively, $u$) to a cube corresponds to multiplying its cube orientation by $1230$ (respectively, $2310$).
As such, we identify
\begin{eqnarray*}
r &=& 1230\\ 
u &=& 2310.  
\end{eqnarray*}
This identification induces a canonical monoid homomorphism 
that associates to each tumble sequence the cube orientation resulting from applying the tumble sequence to the cube from its starting position.
By an abuse of notation,
a sequence of the letters $r$ and $u$ may represent a tumble sequence (with no cancellations) or the associated cube orientation.
In the latter case, being considered in $S_4$, we may write $r\i$ and $u\i$ for the inverse permutations
or simplify using relations such as $r^4=u^4=(r^2 u)^2=(ur)^3=1$.

Given a line segment between a point $p$ in the unit square $[0,1]^2$, and a lattice point $(a,b)$ in the first quadrant,
such that the segment contains no interior lattice points,
there is a unique corresponding \defn{tumble sequence}~$\tau$:

Construct the square grid $\{\Z\times \R \} \cup \{ \R \times \Z \}$.
When the segment crosses a vertical grid line, record an~$r$; when the segment crosses a horizontal grid line, record a~$u$.
The sequence of the letters $r$ and $u$ is the corresponding tumble sequence~$\tau$.
\end{definitions}
We restrict our attention to tumble sequences that correspond to geodesic trajectories on the cube.

In Section~\ref{sec:v2v}, we take the starting point $p$ to be the origin.
In Section~\ref{sec:f2v}, we consider a general starting point in the unit square.


\begin{lemma} \label{tumbleseq}
To tumble a cube from $(0,0)$ so that its upper-right corner touches the lattice point $(a,b)$ with $b<a$,
following a geodesic trajectory with no interior lattice points,
the tumble sequence is a word of length $a+b-2$,
consisting entirely of $\r$s except with $\u$s in positions $\ceil{\frac{a}{b} i} + i -1$ for $i = 1,\ldots,b-1$.
In the symmetric case where $b>a$, the roles of $\r$ and $\u$, and of $a$ and $b$, are reversed.
\end{lemma}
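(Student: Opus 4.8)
The plan is to unwind the definition of the tumble sequence from Definitions~\ref{basicdefs} directly. Recall that we walk along the segment $\segment$ from $p=(0,0)$ to $q=(a,b)$ and record an $\r$ each time it crosses a vertical grid line $x=k$ and a $\u$ each time it crosses a horizontal grid line $y=j$, reading left to right.

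First I would pin down the length. Because $\segment$ contains no interior lattice point, it never crosses a vertical and a horizontal grid line simultaneously in its interior, so each crossing contributes exactly one letter. In its interior the segment crosses the vertical lines $x=1,\dots,a-1$ and the horizontal lines $y=1,\dots,b-1$, so the word has length $(a-1)+(b-1)=a+b-2$, with exactly $a-1$ letters $\r$ and $b-1$ letters $\u$.

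Next I would locate the $\u$s. Parametrizing $\segment$ by $y=\frac{b}{a}x$, the $i$-th crossing of a horizontal line occurs where $y=i$, that is, at $x$-coordinate $\frac{a}{b}i$. If $\frac{a}{b}i$ were an integer $k$, then $(k,i)$ would be a lattice point in the interior of $\segment$ (since $0<i<b$), contradicting the hypothesis; hence $\frac{a}{b}i\notin\Z$ and $\floor{\frac{a}{b}i}=\ceil{\frac{a}{b}i}-1$. The letters preceding this $\u$ in the word are exactly the $\r$s from vertical crossings $x=k$ with $1\le k<\frac{a}{b}i$, of which there are $\floor{\frac{a}{b}i}=\ceil{\frac{a}{b}i}-1$, together with the $i-1$ earlier $\u$s. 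So the $i$-th $\u$ occupies position $\big(\ceil{\frac{a}{b}i}-1\big)+(i-1)+1=\ceil{\frac{a}{b}i}+i-1$, as claimed; since the $b-1$ positions of the $\u$s are now determined and the word has length $a+b-2$, the remaining $a-1$ positions are necessarily $\r$s. Finally, the case $b>a$ follows by reflecting the configuration across the line $y=x$, which interchanges horizontal and vertical grid lines, hence $\r$ and $\u$, and interchanges $a$ and $b$.

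I do not expect a substantial obstacle here: the content is a direct translation of the defining recipe, and the only point requiring care is the off-by-one bookkeeping for ``position,'' together with the observation that $\frac{a}{b}i$ is never an integer for $1\le i\le b-1$ — which is precisely where the ``no interior lattice point'' hypothesis (equivalently $\gcd(a,b)=1$) is used.
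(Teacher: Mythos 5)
Your proposal is correct and takes the same approach the paper intends: the paper's proof is just a one-line appeal to ``the same logic as for drawing straight lines on a computer screen, pixel by pixel,'' i.e.\ counting the ordered grid-line crossings, which is exactly what you carry out in detail. Your bookkeeping checks out (e.g.\ it gives $rur$ for $(3,2)$ and $rurrur$ for $(5,3)$, matching Figure~\ref{fig:tumbletree}), and you correctly identify where the no-interior-lattice-point hypothesis is used, namely that $\tfrac{a}{b}i\notin\Z$ for $1\le i\le b-1$.
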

\begin{proof}
This is easily derived by the same logic as for drawing straight lines on a computer screen, pixel by pixel.
\end{proof}

\begin{corollary}
Every geodesic tumble sequence is a palindrome.
\end{corollary}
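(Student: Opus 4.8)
The plan is to read off the structure of the tumble sequence directly from Lemma~\ref{tumbleseq} and check that it is symmetric under reversal. By the symmetry exchanging the roles of $\r$ and $\u$ (and of $a$ and $b$), it suffices to treat the case $b<a$; the case $a=b$ gives the sequence $\r\u$ or $\u\r$ depending on which side of the diagonal the geodesic passes, each a palindrome of length $2$, and in fact the only geodesic from $(0,0)$ to a point with $a=b$ has $\gcd(a,b)=1$, forcing $a=b=1$ by Corollary~\ref{cor:latvis}, so $\tau$ is the single letter reading, again trivially a palindrome.

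So assume $b<a$. By Lemma~\ref{tumbleseq}, $\tau$ has length $n=a+b-2$, and the positions of the $\u$s are $p_i=\ceil{\tfrac{a}{b}i}+i-1$ for $i=1,\dots,b-1$. The word $\tau$ is a palindrome if and only if the set $P=\{p_1,\dots,p_{b-1}\}$ of $\u$-positions is invariant under the reversal map $j\mapsto n+1-j=a+b-1-j$ on $\{1,\dots,n\}$. Since reversal is an involution and $|P|=b-1$, it is enough to show $a+b-1-p_i\in P$ for each $i$; I expect it to equal $p_{b-i}$. Plugging in, $a+b-1-p_i=a+b-1-\ceil{\tfrac{a}{b}i}-i+1=(a+b-i)-\ceil{\tfrac{a}{b}i}$, while $p_{b-i}=\ceil{\tfrac{a}{b}(b-i)}+(b-i)-1=\ceil{a-\tfrac{a}{b}i}+b-i-1=a+\ceil{-\tfrac{a}{b}i}+b-i-1$. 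So the identity to verify reduces to
\[
a+\Bigl\lceil -\tfrac{a}{b}i\Bigr\rceil + b - i - 1 = a+b-i-\Bigl\lceil \tfrac{a}{b}i\Bigr\rceil,
\]
that is, $\ceil{-\tfrac{a}{b}i}=1-\ceil{\tfrac{a}{b}i}$, equivalently $\ceil{\tfrac{a}{b}i}=-\lfloor -\tfrac{a}{b}i\rfloor$. That last equality holds whenever $\tfrac{a}{b}i$ is not an integer, and $\tfrac{a}{b}i$ is an integer for some $1\le i\le b-1$ only if $b\mid ai$, which (using $\gcd(a,b)=1$, available here since geodesic tumble sequences correspond to visible lattice points from the origin, Corollary~\ref{cor:latvis}) forces $b\mid i$, impossible for $1\le i\le b-1$.

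The only real obstacle is bookkeeping: making sure the reversal map is applied correctly and that the index $b-i$ stays in range $1\le b-i\le b-1$ (it does, for $1\le i\le b-1$), and invoking $\gcd(a,b)=1$ at the right moment to rule out the degenerate integer case. Everything else is the ceiling-floor identity $\ceil{x}=-\lfloor -x\rfloor$ for $x\notin\Z$, which is standard. I would then remark that the palindrome property is exactly the combinatorial shadow of the geometric time-reversal symmetry of a geodesic from $(0,0)$ to $(a,b)$: reversing the trajectory and re-centering sends $\r$-crossings to $\r$-crossings and $\u$-crossings to $\u$-crossings in the opposite order.
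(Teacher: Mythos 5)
Your proof is correct, but it takes a genuinely different route from the paper. The paper's proof is a one-line geometric observation: the half-turn (rotation by $\pi$) about the center of the axis-parallel rectangle with opposite corners $(0,0)$ and $(a,b)$ preserves the integer grid, maps vertical grid lines to vertical ones and horizontal to horizontal, and carries the segment to itself while swapping its endpoints; hence reading the crossing sequence backwards gives the same word. Your closing remark about ``geometric time-reversal symmetry'' is, in essence, the paper's entire argument. What you do instead is verify the palindrome property directly from the explicit position formula of Lemma~\ref{tumbleseq}, reducing it to the identity $\lceil -x\rceil = 1-\lceil x\rceil$ for $x\notin\mathbf{Z}$ and using $\gcd(a,b)=1$ to rule out the integer case; the computation checks out, and it has the side benefit of confirming the formula in Lemma~\ref{tumbleseq} is internally consistent, at the cost of being longer and requiring the coprimality hypothesis explicitly. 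One small inaccuracy in your degenerate case: for $a=b=1$ the segment from $(0,0)$ to $(1,1)$ crosses no grid lines, so the tumble sequence is the empty word of length $a+b-2=0$ (the root of the tree in Figure~\ref{fig:tumbletree}), not a one- or two-letter word; this does not affect the conclusion, since the empty word is a palindrome.
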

\begin{proof}
This follows from the half-turn symmetry of the axis-parallel rectangle whose opposite vertices are $(0,0)$ and $(a,b)$.
\end{proof}

As we roll the cube along the tumble path, we can label the vertices that touch the $xy$-plane (see \fig{tumblepathex}).

\begin{proposition} \label{prop:sumto7}
In each $2$-by-$1$ or $1$-by-$2$ rectangle that makes up the tumble path, opposite vertex labels sum to $7$ (see \fig{tumblepathex}).
\end{proposition}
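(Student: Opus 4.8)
The plan is to track how cube vertices on the $xy$-plane change under a single roll, and observe that the sum-to-$7$ property is a local invariant preserved by each tumble. The key point is that the four vertices of the bottom face of the cube in its starting position are $0,1,2,3$, which occupy the positions of a unit square in the $xy$-plane, and the vertex $7 = (1,1,1)$ sits directly above $0$; more generally, at every moment in the tumble the bottom face carries four labels that are the $4$ vertices of some face of the cube, and the ``diagonally opposite in a $2\times 1$ rectangle'' vertex is always the antipode (the vertex of the cube furthest in graph-distance, whose binary label is the bitwise complement).

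First I would set up notation: as the cube rolls out the tumble path, the bottom face at each step occupies one unit square of the grid, and I record the four vertex labels at the corners of that square together with the label of the vertex currently on top. I claim the invariant: in the starting position the bottom face has corner labels $0,1,2,3$ (with $0$ at the lower-left, $1$ to its right, $2$ above, $3$ diagonal) and top vertex $7$; and after any sequence of $\r$ and $\u$ rolls, the label on top of the bottom-left corner vertex equals the complement (the label $v$ with $v \oplus w = 7$) of that bottom-left label, and similarly complementary pairs sit at diagonal corners. Equivalently: whenever two unit squares of the tumble path share an edge, forming a $2\times 1$ rectangle, the two corner vertices at the far ends of the long diagonal are antipodal on the cube, hence their binary labels are bitwise complements, hence sum to $7$.

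The induction step is the heart of it. Consider one roll, say $\r$: the cube pivots about the shared edge, so the two vertices of the bottom face on that edge stay on the $xy$-plane, the other two bottom vertices lift off, and the two vertices that were on the top face and on the corresponding edge come down to the plane. Using the explicit identification $\r = 1230$ (and $\u = 2310$) on diagonals, or equally well a direct check on binary coordinates — rolling right is a rotation about a horizontal edge, which acts on $(x,y,z)\in\{0,1\}^3$ in a computable way — one verifies that if before the roll the bottom-left corner carried label $v$ and the rectangle-diagonal relation gave complements summing to $7$, then after the roll the newly-placed square again has this property, with its new bottom-left label and new diagonal label still complementary. Because a single roll shifts the bottom square by one unit, every $2\times 1$ rectangle in the tumble path consists of the square before some roll and the square after it, so checking this one transition (for each of $\r$ and $\u$, by the $r\leftrightarrow u$ symmetry really just once) suffices. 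One can phrase this cleanly: the antipodal map on cube vertices is central in the symmetry group $S_4$-picture (it is the unique order-$2$ element acting as $-I$ on the cube), so it commutes with the isometries $\r,\u$, and therefore two plane-points that are cube-antipodal at one step remain cube-antipodal at the next; and two grid corners at opposite ends of a $2\times1$ rectangle are always occupied by cube-antipodal vertices.

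The main obstacle I anticipate is purely bookkeeping: being careful about which corner of which square is which after a roll (orientations can flip, and one must make sure ``opposite vertex labels'' in the statement really does mean the long-diagonal pair of the $2\times 1$ rectangle, not some other pair), and making the base case match conventions from Definitions~\ref{basicdefs}. I would handle this by drawing the two squares of the first $\r$ roll explicitly, labeling all six relevant plane-points with their cube-vertex labels ($0,1,2,3$ for the first square; then after rolling right, $1,?,3,?$ on the shared edge and the two descending vertices $5,7$ filling in), and verifying $0+7 = 1+6 = 3+4 = 5+2 = 7$. Once that concrete check is in hand, the general claim follows by the commuting-antipode argument with no further computation.
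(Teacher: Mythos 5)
Your proof is correct and follows essentially the same route as the paper's: both rest on the observation that the two far corners of each $2\times 1$ rectangle are occupied by antipodal (diagonally opposite) cube vertices, whose binary coordinate labels are bitwise complements and hence sum to $7$. The only difference is one of emphasis --- the paper asserts the antipodality of those corner vertices outright, while you justify it by an explicit check of a single roll together with the fact that cube isometries preserve antipodal pairs.
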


\begin{proof}
Vertices that appear opposite each other in the $2$-by-$1$ rectangles that make up the tumble path are opposite ends of diagonals of the cube.
Because we used the cube's coordinates to label the vertices, opposite ends of diagonals have $x$-, $y$- and $z$-coordinates that each sum to $1$.
Since each coordinate sums to $1$, the binary expansion of the sum is $111$, or $7$.
\end{proof}

\begin{figure}[hbtp]
\begin{tikzpicture}
  \node (0)               {$\mathbf{0}$};
  \node (1)  [right=of 0] {$\mathbf{1}$};
  \node (2)  [above=of 0] {$\mathbf{2}$};
  \node (3)  [above=of 1] {$\mathbf{3}$};
  \node (6)  [above=of 2] {\phantom{$1$}};
  \node (7u) [above=of 3] {\phantom{$1$}};
  \node (5)  [right=of 1] {$5$};
  \node (7r) [right=of 3] {$7$};
    \node (a)  [right=of 5] {$4$};
    \node (b)  [right=of 7r] {$6$};
     \node (c)  [above=of b] {$2$};
     \node (d)  [above=of 7r] {$3$};
     \node (e)  [right=of c] {$0$};
     \node (f)  [right=of b] {$4$};
     \node (g)  [right=of e] {$1$};
     \node (h)  [right=of f] {$5$};
     \node (i)  [right=of a] {\phantom{$1$}};
     \node (j)  [right=of i] {\phantom{$1$}};
  \path (0) edge (1) edge (2);
  \path (3) edge (1) edge (7r) edge (7u) edge (2);
  \path (6) edge (2) edge (7u) ;
  \path (5) edge (1) edge (7r) ;
  \path (7r) edge (b)  ;
  \path (b) edge (f);
  \path (f) edge (h);
  \path (7u) edge (d);
  \path (d) edge (c)  ;
  \path (c) edge (e);
  \path (e) edge (g);
  \path (g) edge (h);
    \path (d) edge (7r)  ;
  \path (b) edge (c);
  \path (g) edge (h);
  \path (5) edge (a);
    \path (a) edge (b)  ;
  \path (a) edge (i);
  \path (i) edge (j);
  \path (j) edge (h);
  \path (f) edge (i);
  \path (f) edge (e);
  \path (0) edge (g); 
\end{tikzpicture}
\begin{quote}\caption{The tumble path for the tumble sequence $rrurr$, corresponding to the diagonal geodesic. \label{fig:tumblepathex}} \end{quote}
\vspace{-2.0em}
\end{figure}
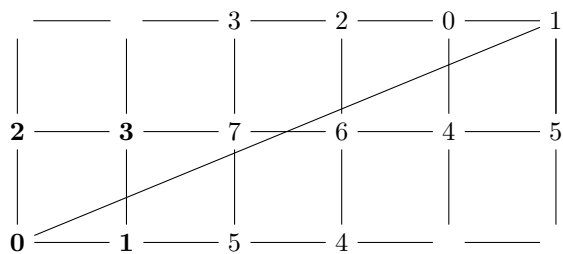

Proposition~\ref{prop:sumto7} gives an easy way to move through a tumble path:
if the vertex labels in \fig{tumblepathex} other than the initial four (in bold) are erased,
they can all be filled back in by using Proposition~\ref{prop:sumto7}.


The proof of the following observation is trivial and omitted.

\begin{observation}
If a given tumble word is equivalent to the identity then its mirror reverse is also equivalent to the identity.
\end{observation}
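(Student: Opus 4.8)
The plan is to reduce this to a one-line computation in $S_4$. Recall from the conventions above that a tumble word $\tau = s_1 s_2 \cdots s_n$, with each $s_i\in\{r,u\}$, is equivalent to the identity precisely when the product $s_1 s_2 \cdots s_n$, taken in $S_4$ under the identification $r = 1230$ and $u = 2310$, is the identity permutation; in cycle notation $r = (0\,1\,2\,3)$ and $u = (0\,2\,1\,3)$. Its mirror reverse is the word $\tau^* = s_n s_{n-1} \cdots s_1$, which is equivalent to the identity precisely when $s_n s_{n-1}\cdots s_1 = 1$ in $S_4$. So the content is: $s_1\cdots s_n = 1$ implies $s_n\cdots s_1 = 1$.

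First I would exhibit a single symmetry of the cube that inverts both generators at once. Let $g\in S_4$ be the double transposition $(0\,3)(1\,2)$ -- geometrically, the half-turn about the vertical axis through the centers of the top and bottom faces, which reverses both the ``right'' and the ``up'' rolling directions. Since conjugation by $g$ relabels the entries of a cycle, $g r g\i = (g(0)\,g(1)\,g(2)\,g(3)) = (3\,2\,1\,0) = r\i$, and likewise $g u g\i = (3\,1\,2\,0) = u\i$. Because $g$ is an involution this says equivalently that $g\,s\i\,g\i = s$ for each $s\in\{r,u\}$.

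Now suppose $\tau=s_1 s_2\cdots s_n$ is equivalent to the identity, i.e. $s_1 s_2\cdots s_n = 1$ in $S_4$. Taking inverses gives $s_n\i s_{n-1}\i\cdots s_1\i = 1$, and conjugating this relation by $g$ yields
\[
1 \;=\; g\bigl(s_n\i s_{n-1}\i\cdots s_1\i\bigr)g\i \;=\; \bigl(g s_n\i g\i\bigr)\bigl(g s_{n-1}\i g\i\bigr)\cdots\bigl(g s_1\i g\i\bigr) \;=\; s_n s_{n-1}\cdots s_1,
\]
which is exactly the $S_4$-element attached to $\tau^*$. Hence $\tau^*$ is equivalent to the identity, as claimed. (Compactly: $x\mapsto g x\i g\i$ is an anti-automorphism of $S_4$ fixing $r$ and $u$, hence it carries the word $\tau$ to the word $\tau^*$ and the identity to the identity.)

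The argument is short enough to justify the paper's ``trivial and omitted.'' The only thing to verify is the pair of conjugation identities $g r g\i = r\i$ and $g u g\i = u\i$, which is immediate from the cycle descriptions $r=(0\,1\,2\,3)$, $u=(0\,2\,1\,3)$; that is the (very mild) crux, and everything else is formal. If instead ``mirror reverse'' is intended to also interchange the letters $r$ and $u$ (reflecting the tumble path across the anti-diagonal), the identical argument goes through verbatim with $g$ replaced by the transposition $(0\,3)$, which satisfies $g r\i g\i = u$ and $g u\i g\i = r$.
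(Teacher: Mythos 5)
Your proof is correct, and it rests on the same idea as the paper's omitted argument: a symmetry of the cube that inverts both generators (your $g=(0\,3)(1\,2)$ with $grg\i=r\i$ and $gug\i=u\i$ plays exactly the role of the paper's ``down''/``left'' rolls), combined with the fact that inverting a product reverses the order of its factors. Your packaging of this as the anti-automorphism $x\mapsto g x\i g\i$ fixing $r$ and $u$ is a cleaner formulation that treats an arbitrary word uniformly, where the paper's suppressed sketch works out only the case $u^k r^l u^m r^n$ and appeals to ``the same manner'' for the rest.
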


\newcommand{\J}{\mathbf{1}}
\renewcommand{\t}{\tau}
\newcommand{\w}{\omega}
\newcommand{\p}{\pi}
\newcommand{\oo}{\infty}
\newcommand{\T}{{}^{\mathsf{T}}}
\renewcommand{\root}{g} 
\section{Vertex-to-vertex paths on the cube} \label{sec:v2v}

In this section, we examine vertex-to-vertex paths on the cube
that each correspond to a segment from the origin $(0,0)$ to a lattice point $(a,b)$ in the first quadrant.
In \fig{orchard}, each ``sight line'' is drawn starting from a lattice point visible%
\footnote{\emph{visible}: Definition~\ref{def:visible}. We think of ``visibility'' as if standing at the corner of an orchard, with trees placed at grid points, as in \fig{orchard}.}
from the origin and extending to ``block'' other lattice points from view of the origin.
It is helpful to put a structure on the visible lattice points, called the Stern--Brocot tree.

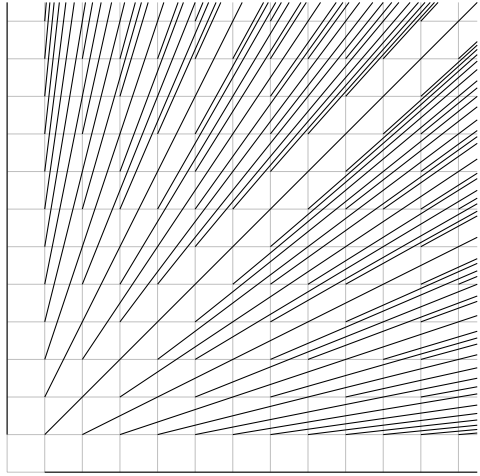
\begin{figure}[hbtp]
\begin{tikzpicture}[scale=0.5]
\draw [step=1cm, gray!50, very thin] (0,0) grid (12.5,12.5);
\draw (1,0) -- (12.5,0);
\draw (0,1) -- (0,12.5);

\draw ( 1,12) -- (12.5*1/12, 12.5); \draw ( 1,11) -- (12.5*1/11, 12.5); \draw ( 1,10) -- (12.5*1/10, 12.5); \draw ( 1, 9) -- (12.5*1/9, 12.5); \draw ( 1, 8) -- (12.5*1/8, 12.5); \draw ( 1, 7) -- (12.5*1/7, 12.5); \draw ( 1, 6) -- (12.5*1/6, 12.5); \draw ( 2,11) -- (12.5*2/11, 12.5); \draw ( 1, 5) -- (12.5*1/5, 12.5); \draw ( 2, 9) -- (12.5*2/9, 12.5); \draw ( 1, 4) -- (12.5*1/4, 12.5); \draw ( 3,11) -- (12.5*3/11, 12.5); \draw ( 2, 7) -- (12.5*2/7, 12.5); \draw ( 3,10) -- (12.5*3/10, 12.5); \draw ( 1, 3) -- (12.5*1/3, 12.5); \draw ( 4,11) -- (12.5*4/11, 12.5); \draw ( 3, 8) -- (12.5*3/8, 12.5); \draw ( 2, 5) -- (12.5*2/5, 12.5); \draw ( 5,12) -- (12.5*5/12, 12.5); \draw ( 3, 7) -- (12.5*3/7, 12.5); \draw ( 4, 9) -- (12.5*4/9, 12.5); \draw ( 5,11) -- (12.5*5/11, 12.5); \draw ( 1, 2) -- (12.5*1/2, 12.5); \draw ( 6,11) -- (12.5*6/11, 12.5); \draw ( 5, 9) -- (12.5*5/9, 12.5); \draw ( 4, 7) -- (12.5*4/7, 12.5); \draw ( 7,12) -- (12.5*7/12, 12.5); \draw ( 3, 5) -- (12.5*3/5, 12.5); \draw ( 5, 8) -- (12.5*5/8, 12.5); \draw ( 7,11) -- (12.5*7/11, 12.5); \draw ( 2, 3) -- (12.5*2/3, 12.5); \draw ( 7,10) -- (12.5*7/10, 12.5); \draw ( 5, 7) -- (12.5*5/7, 12.5); \draw ( 8,11) -- (12.5*8/11, 12.5); \draw ( 3, 4) -- (12.5*3/4, 12.5); \draw ( 7, 9) -- (12.5*7/9, 12.5); \draw ( 4, 5) -- (12.5*4/5, 12.5); \draw ( 9,11) -- (12.5*9/11, 12.5); \draw ( 5, 6) -- (12.5*5/6, 12.5); \draw ( 6, 7) -- (12.5*6/7, 12.5); \draw ( 7, 8) -- (12.5*7/8, 12.5); \draw ( 8, 9) -- (12.5*8/9, 12.5); \draw ( 9,10) -- (12.5*9/10, 12.5); \draw (10,11) -- (12.5*10/11, 12.5); \draw (11,12) -- (12.5*11/12, 12.5); \draw ( 1, 1) -- (12.5*1/1, 12.5); \draw (12,11) -- (12.5, 12.5*11/12); \draw (11,10) -- (12.5, 12.5*10/11); \draw (10, 9) -- (12.5, 12.5*9/10); \draw ( 9, 8) -- (12.5, 12.5*8/9); \draw ( 8, 7) -- (12.5, 12.5*7/8); \draw ( 7, 6) -- (12.5, 12.5*6/7); \draw ( 6, 5) -- (12.5, 12.5*5/6); \draw (11, 9) -- (12.5, 12.5*9/11); \draw ( 5, 4) -- (12.5, 12.5*4/5); \draw ( 9, 7) -- (12.5, 12.5*7/9); \draw ( 4, 3) -- (12.5, 12.5*3/4); \draw (11, 8) -- (12.5, 12.5*8/11); \draw ( 7, 5) -- (12.5, 12.5*5/7); \draw (10, 7) -- (12.5, 12.5*7/10); \draw ( 3, 2) -- (12.5, 12.5*2/3); \draw (11, 7) -- (12.5, 12.5*7/11); \draw ( 8, 5) -- (12.5, 12.5*5/8); \draw ( 5, 3) -- (12.5, 12.5*3/5); \draw (12, 7) -- (12.5, 12.5*7/12); \draw ( 7, 4) -- (12.5, 12.5*4/7); \draw ( 9, 5) -- (12.5, 12.5*5/9); \draw (11, 6) -- (12.5, 12.5*6/11); \draw ( 2, 1) -- (12.5, 12.5*1/2); \draw (11, 5) -- (12.5, 12.5*5/11); \draw ( 9, 4) -- (12.5, 12.5*4/9); \draw ( 7, 3) -- (12.5, 12.5*3/7); \draw (12, 5) -- (12.5, 12.5*5/12); \draw ( 5, 2) -- (12.5, 12.5*2/5); \draw ( 8, 3) -- (12.5, 12.5*3/8); \draw (11, 4) -- (12.5, 12.5*4/11); \draw ( 3, 1) -- (12.5, 12.5*1/3); \draw (10, 3) -- (12.5, 12.5*3/10); \draw ( 7, 2) -- (12.5, 12.5*2/7); \draw (11, 3) -- (12.5, 12.5*3/11); \draw ( 4, 1) -- (12.5, 12.5*1/4); \draw ( 9, 2) -- (12.5, 12.5*2/9); \draw ( 5, 1) -- (12.5, 12.5*1/5); \draw (11, 2) -- (12.5, 12.5*2/11); \draw ( 6, 1) -- (12.5, 12.5*1/6); \draw ( 7, 1) -- (12.5, 12.5*1/7); \draw ( 8, 1) -- (12.5, 12.5*1/8); \draw ( 9, 1) -- (12.5, 12.5*1/9); \draw (10, 1) -- (12.5, 12.5*1/10); \draw (11, 1) -- (12.5, 12.5*1/11); \draw (12, 1) -- (12.5, 12.5*1/12);
\end{tikzpicture}
\caption{Sight lines in an orchard.  \label{fig:orchard}}
\end{figure}

\begin{definition}[Stern--Brocot tree]
Consider the set
$V=\left\{ (a,b)\in\N\times\N\mid\gcd(a,b)=1\right\} $
of lattice points in the first quadrant that are visible from the origin.%
The \defn{Stern--Brocot tree} $T$ is a binary tree structure on~$V$,
recursively defined as follows.
For each $t\in V$, let $t^+$ and $t^-$ denote its \defn{positive parent} and \defn{negative parent}, respectively.
For the \defn{root node} $\root:=(1,1)$, let $\root^{+}:=(0,1)$ and $\root^{-}:=(1,0)$.

For $t\in V$ with $t^+$ defined,
let its \defn{positive child} $t_+$ be given by $t_+=t+t^+$.
The positive and negative parents of $t_+$ are given by $(t_+)^+=t^+$ and $(t_+)^-=t$, respectively.
Similarly, for $t\in V$ with $t^-$ defined,
its \defn{negative child} $t_-$ is given by $t_-=t+t^-$,
whose positive and negative parents are given by $(t_-)^+=t$ and $(t_-)^-=t^-$, respectively.
Note that $t=t^{-}+t^{+}$ for $t\in V$.

For each non-root $t\in V$, draw a \defn{positive edge} $(t,t_+)$ and a \defn{negative edge} $(t,t_-)$.
This establishes the binary tree structure~$T$.
Note that for a non-root $t\in V$, precisely one of $(t^+,t)$ and $(t^-,t)$ is an edge in~$T$.
By an abuse of notation, we write $t\in T$ instead of $t\in V$.

The \defn{depth} of an element $t\in T$ is the distance $d(t)$ in the tree from the root $\root$ to~$t$.
\end{definition}

Note that $\root^+=(0,1)$ and $\root^-=(1,0)$, the parents of the root node,
are not strictly considered to be part of the tree~$T$.
Let $\overline{T}:=T\cup\{ \root^+,\root^-\} $,
a ``completion'' which is useful for the recurrences discussed below.
Figure~\ref{sbt} shows the first five levels of the Stern--Brocot tree~$T$, depths $0$ through~$4$, along with the parents of the root;
when tracing from left to right, positive edges slant upwards and negative edges slant downwards.

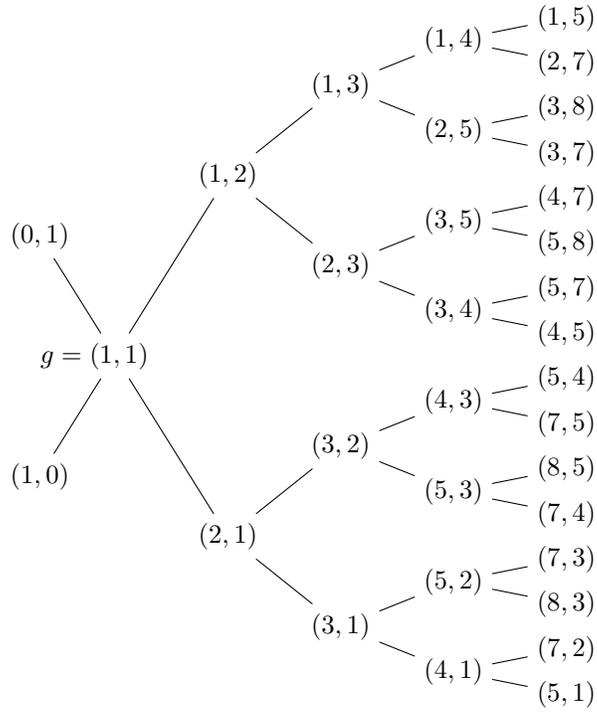
\begin{figure}[hbtp] 
\begin{tikzpicture}
  [grow=right,
   level distance=1.5cm,
   level 1/.style={sibling distance=-4.8cm},
   level 2/.style={sibling distance=-2.4cm},
   level 3/.style={sibling distance=-1.2cm},
   level 4/.style={sibling distance=-0.6cm},
   level 5/.style={sibling distance=-0.3cm}
  ]

  \node (positive-ancestor) {$(0,1)$};
  \node (root) [below=of positive-ancestor,xshift=1cm] {$\root=(1,1)\phantom{=\root}$}
    child {node {$(1,2)$}
      child {node {$(1,3)$}
        child {node {$(1,4)$}
          child {node {$(1,5)$}}
          child {node {$(2,7)$}}
        }
        child {node {$(2,5)$}
          child {node {$(3,8)$}}
          child {node {$(3,7)$}}
        }
      }
      child {node {$(2,3)$}
        child {node {$(3,5)$}
          child {node {$(4,7)$}}
          child {node {$(5,8)$}}
        }
        child {node {$(3,4)$}
          child {node {$(5,7)$}}
          child {node {$(4,5)$}}
        }
      }
    }
    child {node {$(2,1)$}
      child {node {$(3,2)$}
        child {node {$(4,3)$}
          child {node {$(5,4)$}}
          child {node {$(7,5)$}}
        }
        child {node {$(5,3)$}
          child {node {$(8,5)$}}
          child {node {$(7,4)$}}
        }
      }
      child {node {$(3,1)$}
        child {node {$(5,2)$}
          child {node {$(7,3)$}}
          child {node {$(8,3)$}}
        }
        child {node {$(4,1)$}
          child {node {$(7,2)$}}
          child {node {$(5,1)$}}
        }
      }
    }
  ;
  \node (negative-ancestor) [below=of root,xshift=-1cm] {$(1,0)$};
  \draw (positive-ancestor) -- (root) -- (negative-ancestor);
\end{tikzpicture}
\begin{quote}\caption{The Stern--Brocot tree showing depths $0,1,2,3$ and $4$.\label{sbt}} \end{quote}
\end{figure}

\begin{definition}
To each $t\in T$, we associate the following:
\begin{itemize}
\item $\t(t)$ = the tumble sequence associated to~$t$, a word on alphabet $\{r,u\}$.
\item $\s(t)$ = the associated cube orientation, an element of~$S_4$.
\end{itemize}
For $t=(a,b)\in T$, as $\gcd(a,b)=1$,
the line segment from $(0,0)$ to $(a,b)$ does not contain any other lattice points,
so there is a unique tumble sequence $\t(t)$ associated to~$t$ (see \fig{tumbletree}).

Tumbling the cube according to $\t(t)$ results in the cube orientation element $\s(t) \in S_4$.
\end{definition}

\begin{figure}[hbtp] 
\begin{tikzpicture}
  [grow=right,
   level distance=1.8cm,
   level 1/.style={sibling distance=-4.8cm},
   level 2/.style={sibling distance=-2.4cm},
   level 3/.style={sibling distance=-1.2cm},
   level 4/.style={sibling distance=-0.6cm},
   level 5/.style={sibling distance=-0.3cm}
  ]

  \node (root) [below=of positive-ancestor] {$1_{T}$}
    child {node {$u$}
      child {node {$uu$}
        child {node {$uuu$}
          child {node {$uuuu$}}
          child {node {$uuuruuu$}}
        }
        child {node {$uuruu$}
          child {node {$uuruuuruu$}}
          child {node {$uuruuruu$}}
        }
      }
      child {node {$uru$}
        child {node {$uruuru$}
          child {node {$uruuruuru$}}
          child {node {$uruururuuru$}}
        }
        child {node {$ururu$}
          child {node {$ururuururu$}}
          child {node {$urururu$}}
        }
      }
    }
    child {node {$r$}
      child {node {$rur$}
        child {node {$rurur$}
          child {node {$rururur$}}
          child {node {$rururrurur$}}
        }
        child {node {$rurrur$}
          child {node {$rurrururrur$}}
          child {node {$rurrurrur$}}
        }
      }
      child {node {$rr$}
        child {node {$rrurr$}
          child {node {$rrurrurr$}}
          child {node {$rrurrrurr$}}
        }
        child {node {$rrr$}
          child {node {$rrrurrr$}}
          child {node {$rrrr$}}
        }
      }
    }
  ;
\end{tikzpicture}
\begin{quote}\caption{The Stern--Brocot tree showing tumble sequences. \label{fig:tumbletree}} \end{quote}
\end{figure}
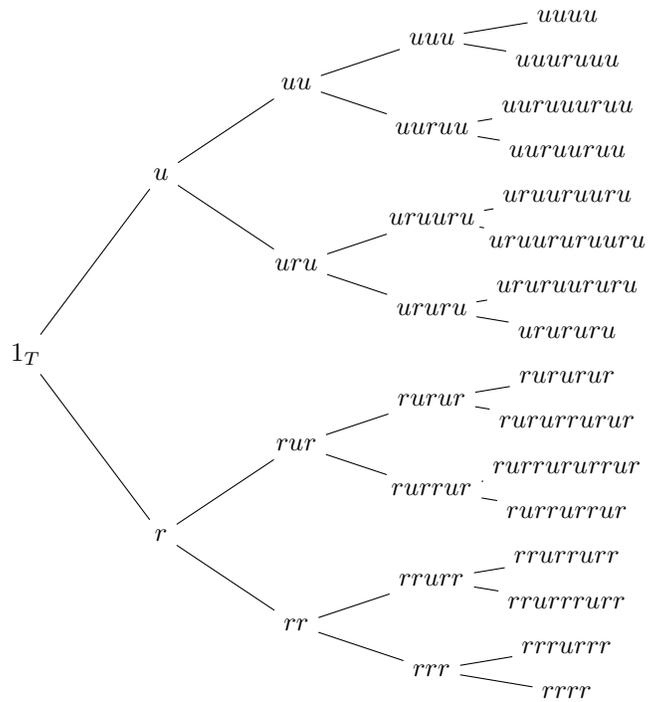

\begin{definition}
If $(t^+,t)$ are adjacent, the \defn{positive branch} of $t$ is the set of vertices $b_+(t):=\{s\in T:s^+=t^+\}$ that are positive descendants of~$t$.
Analogously, if $(t^-,t)$ are adjacent, the \defn{negative branch} is given by $b_-(t):=\{s\in T:s^-=t^-\}$.
For any non-root $t\in T$, precisely one of $b_+(t)$ and $b_-(t)$ is defined, which we denote $b(t)$ for convenience.
\end{definition}
For example, the positive branch $b_+(r)$ of $r$ is $\{rur, rurur, rururur, \dotsc\}$ 
and the negative branch $b_-(r)$ is not defined as $r$ is a negative child (see \fig{tumbletree}).

\renewcommand{\a}[1]{\gen{#1}}
We include the following standard result regarding the Stern--Brocot tree for completeness.
For $t,t'\in T$, let $P(t,t')$ denote the parallelogram with vertices $(0,0)$, $t$, $t'$, and $t+t'$.
\begin{lemma} \label{lem:empty}
Let $t\in T$.
The parallelograms $P(t,t^+)$ and $P(t,t^-)$ contain no lattice points in their interiors.
\end{lemma}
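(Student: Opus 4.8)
The plan is to reduce the statement to the standard fact that a parallelogram spanned by a basis of the integer lattice $\Z^2$ has no lattice point in its interior. Concretely, I will show that for every $t\in T$ both ordered pairs $(t,t^{+})$ and $(t,t^{-})$ are bases of $\Z^2$, i.e.\ the $2\times2$ integer matrix having these as columns has determinant $\pm1$; the conclusion about $P(t,t^{+})$ and $P(t,t^{-})$ then follows immediately.

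First I would record that every node $t\in T$ has both parents $t^{+}$ and $t^{-}$ defined: this holds for the root $\root=(1,1)$ by fiat, and it is inherited by positive children (where $(t_{+})^{+}=t^{+}$ and $(t_{+})^{-}=t$) and by negative children (where $(t_{-})^{+}=t$ and $(t_{-})^{-}=t^{-}$). I would then prove by induction on the depth $d(t)$ the unimodularity invariant $\det(t^{+}\mid t^{-})=-1$, where $t^{+},t^{-}$ are regarded as column vectors. The base case is $\det\bigl((0,1)\mid(1,0)\bigr)=-1$. For the inductive step, using the identity $t=t^{+}+t^{-}$ together with linearity of the determinant in each column: at the positive child, $\det\bigl((t_{+})^{+}\mid (t_{+})^{-}\bigr)=\det(t^{+}\mid t)=\det(t^{+}\mid t^{+}+t^{-})=\det(t^{+}\mid t^{-})=-1$; at the negative child, $\det\bigl((t_{-})^{+}\mid (t_{-})^{-}\bigr)=\det(t\mid t^{-})=\det(t^{+}+t^{-}\mid t^{-})=\det(t^{+}\mid t^{-})=-1$. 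Hence the invariant holds at every node of $T$.

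From the invariant and $t=t^{+}+t^{-}$ I get $\det(t\mid t^{+})=\det(t^{-}\mid t^{+})=1$ and $\det(t\mid t^{-})=\det(t^{+}\mid t^{-})=-1$, so $\{t,t^{+}\}$ and $\{t,t^{-}\}$ are each a $\Z$-basis of $\Z^2$. For the geometric conclusion, parametrize $P(t,t^{+})=\{\lambda t+\mu t^{+}:\lambda,\mu\in[0,1]\}$; since $t$ and $t^{+}$ are linearly independent this map is injective, so the interior of $P(t,t^{+})$ is precisely the image of $(0,1)^2$. If $\lambda t+\mu t^{+}\in\Z^2$, then because $\{t,t^{+}\}$ is a lattice basis we must have $\lambda,\mu\in\Z$, which is incompatible with $\lambda,\mu\in(0,1)$; therefore the interior of $P(t,t^{+})$ contains no lattice point. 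The identical argument with $t^{-}$ in place of $t^{+}$ handles $P(t,t^{-})$.

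I do not anticipate a genuine obstacle here; the only care required is the bookkeeping of orientation in the determinant invariant and the verification that it is preserved by both child operations of the Stern--Brocot recursion. Once that invariant is established, the ``empty parallelogram'' statement is just the observation that a fundamental parallelogram of a lattice has area $1$ and hence, by Pick's theorem, no interior lattice points.
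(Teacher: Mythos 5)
Your proof is correct and takes essentially the same route as the paper: both establish that the parallelograms have (signed) area $\pm1$ by induction along the Stern--Brocot recursion and then conclude emptiness. The only cosmetic differences are that you carry the invariant $\det(t^{+}\mid t^{-})=-1$ one tree-step at a time (the paper instead recurses back to $s=(t^{+})_{-}$ and uses $t=s+kt^{+}$), and you finish with the fundamental-parallelogram property of a lattice basis where the paper invokes Pick's theorem.
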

\begin{proof}
By Pick's theorem, if the area of the lattice polygon $P(t,t')$ is $1$,
then $P(t,t')$ contains no interior lattice points (and no lattice points on the boundary besides its four vertices).
For $t=(x,y)$ and $t'=(x',y')$, let $\a{t,t'}=xy'-yx'$.
Note that $\a{\cdot,\cdot}$ is a skew-symmetric bilinear form,
and $\abs{\a{t,t'}}$ is the area of the parallelogram $P(t,t')$.
It suffices to prove that $\abs{\a{t,t^+}}=\abs{\a{t,t^-}}=1$.

We prove this by induction on the depth~$d(t)$.
The base case where $t$ is the root $\root=(1,1)$ is trivial.

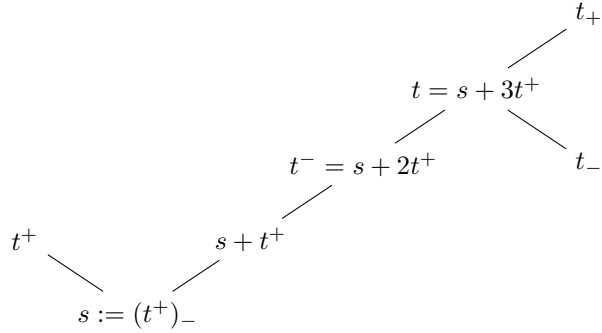
\begin{figure}[hbtp]
\begin{tikzpicture}[xscale=1.5,yscale=-1]
\path (0,0)
   node (A) {$t^+$}
   ++(1,+1) node (B) {$s:=(t^+)_-$}
   ++(1,-1) node (C) {$s+t^+$}
   ++(1,-1) node (D) {$t^-=s+2t^+$}
   ++(1,-1) node (E) {$t=s+3t^+$}
   +(1,+1) node (F) {$t_-$}
   +(1,-1) node (G) {$t_+$}
   ;
\draw (A) -- (B) -- (C) -- (D) -- (E) -- (F)
   (E) -- (G);
\end{tikzpicture}
\caption{The subtree referenced in Lemma~\ref{lem:empty}, with $k=3$. \label{fig:para-prop}}
\end{figure}
Suppose $t$ is not the root~$\root$.
Without loss of generality, suppose $t$ is a positive child, so $t$ is adjacent to~$t^-$ but not to~$t^+$ (see \fig{para-prop}).
Let $s:=(t^+)_-$ be the negative child of the positive parent of~$t$.
By definition, $t$ is the $k$th positive child of $s$ for some $k\geq1$,
and so $t=s+kt^+$.
By bilinearity, we get
$$\a{t,t^+}=\a{s+kt^+,t^+}=\a{s,t^+}+k\a{t^+,t^+}=\a{s,t^+}.$$
As $d(s)<d(t)$, by induction we have $\abs{\a{s,s^+}}=1$.
Because $s^+=t^+$, we obtain $\abs{\a{t,t^+}}=1$, as desired.
Using skew-symmetry, we obtain
\begin{align*}
\a{t,t^-}&=\a{s+kt^+,s+(k-1)t^+}=\a{s,s}+k\a{t^+,s}+(k-1)\a{s,t^+}+k(k-1)\a{t^+,t^+}\\
&=-k\a{s,t^+}+(k-1)\a{s,t^+}=-\a{s,t^+},
\end{align*}
so $\abs{\a{t,t^-}}=\abs{\a{s,t^+}}=1$, as desired.
\end{proof}

We use the result above to derive a recurrence of tumble sequences.
Note that $\t(t^+)$ and $\t(t^-)$ are both defined if and only if $t\notin b_+(\root)\cup b_-(\root)$.

\begin{lemma}[Tumble recurrence] \label{lem:recurrence}
Let $t\in T$ and $\root$ the root.
If $t\notin b_+(\root)\cup b_-(\root)$, then
$$ \t(t)=\t(t^+)ru\t(t^-)=\t(t^-)ur\t(t^+).$$
\end{lemma}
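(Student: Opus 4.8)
The plan is to prove the identity $\t(t)=\t(t^+)ru\,\t(t^-)=\t(t^-)ur\,\t(t^+)$ by induction on the depth $d(t)$, reducing the geometric statement about tumble sequences to a statement about the lattice points $t, t^+, t^-$ and the parallelograms $P(t,t^+)$, $P(t,t^-)$. The key observation is that, since $\gcd(a,b)=1$ for $t=(a,b)\in T$, the tumble sequence $\t(t)$ is literally the word produced by walking the segment from $(0,0)$ to $t$ and recording $r$ for each vertical gridline crossed and $u$ for each horizontal one (Definitions~\ref{basicdefs}). So $\t(t)$ is a function of the segment, and what I need to understand is how the segment to $t=t^++t^-$ decomposes in terms of the segments to $t^+$ and to $t^-$.

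First I would set up the geometry. By Lemma~\ref{lem:empty}, the parallelogram $P(t^+,t^-)$ with vertices $(0,0)$, $t^+$, $t^-$, $t=t^++t^-$ has area $1$ and hence no interior lattice points and no boundary lattice points other than its four corners (note $\gcd$ of the coordinates of $t^+$ and of $t^-$ are each $1$, so no lattice points lie in the interiors of the four edges either). The diagonal of this parallelogram from $(0,0)$ to $t$ is exactly the segment defining $\t(t)$. Now translate: the segment from $t^+$ to $t$ is the translate by $t^+$ of the segment from $(0,0)$ to $t^-$, and the segment from $t^-$ to $t$ is the translate by $t^-$ of the segment from $(0,0)$ to $t^+$. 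I would argue that the diagonal $\overline{(0,0)\,t}$, together with the two ``sides'' of the parallelogram read as lattice paths, fit together so that the gridline-crossing word along $\overline{(0,0)\,t}$ is obtained from the words along $\overline{(0,0)\,t^+}$ and $\overline{(0,0)\,t^-}$ by concatenation, inserting exactly the crossings that occur ``near'' the corner $t^+$ (resp.\ $t^-$). Concretely, walking from $(0,0)$ toward $t$, the segment first stays on the $t^+$ side of the parallelogram's long diagonal structure and then crosses to the $t^-$ side; the transition contributes one $r$ and one $u$ (in one order on one side, the reverse order on the other), which is the source of the interpolating $ru$ versus $ur$. This is the heart of the matter and where I expect the main obstacle: making the ``first stays near $t^+$, then near $t^-$'' claim precise and rigorous, i.e.\ controlling exactly which gridlines are crossed in which order and verifying that no spurious crossings appear — this is really a lattice-path/continued-fraction bookkeeping argument, and getting the two insertion letters $r,u$ and their order correct on each side is the delicate part. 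The palindrome corollary to Lemma~\ref{tumbleseq} will help check consistency, since $\t(t^+)ru\,\t(t^-)$ reversed is $\t(t^-)^{\mathrm{rev}}ur\,\t(t^+)^{\mathrm{rev}}=\t(t^-)ur\,\t(t^+)$, so the two asserted expressions for $\t(t)$ are automatically mirror images of each other; thus it suffices to prove one of them, say $\t(t)=\t(t^+)ru\,\t(t^-)$.

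For the inductive step I would split on whether $t$ is a positive or a negative child and use the recursive definition of the tree. Say $t$ is a positive child, so $t^+$ is not adjacent to $t$ in $T$ while $t^-$ is; then $t=s+kt^+$ where $s=(t^+)_-$ and $t$ is the $k$th positive child of $s$, and $t^-=s+(k-1)t^+$ (this is exactly the picture in Figure~\ref{fig:para-prop}). Here $\t(t^+)$ already makes sense (it's the shorter word), and I can relate $\t(t)$, $\t(t^-)$ and, going up the chain of positive children $s, s_+, s_{++},\dots$, peel off copies of the ``$\t(t^+)ru$'' block one at a time: each step from the $j$th to the $(j{+}1)$st positive child of $s$ appends $ru\,\t(t^+)$ on one end (and, by the palindrome symmetry, $\t(t^+)ur$ on the other), which is precisely the content of the recurrence applied one level down. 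The base cases — $t$ at depth $1$, i.e.\ $t\in\{(1,2),(2,1)\}$, where one of $\t(t^+),\t(t^-)$ involves the root's parents $\root^{\pm}=(0,1),(1,0)$ with empty tumble sequences — are checked directly against Figure~\ref{fig:tumbletree}: for instance $\t((2,1))=r$, with $\t(\root)=1_T$ (empty) as one contribution; one verifies $\t((2,1))=\t((1,1))\,ru\,\t((1,0)) $ reads as $(\varepsilon)(ru)(\varepsilon)$ — here I'd need the convention that $\t(\root^-)$ contributes the empty word and one of the inserted letters is absorbed/elided at the boundary, which I would state explicitly when introducing $\overline T$. Assembling: the geometric lemma (parallelogram decomposition of the segment to $t$) gives the identity outright once the crossing-order bookkeeping is done; the inductive formulation via chains of positive/negative children is the clean alternative if the direct geometric argument proves too fiddly, and I would present whichever is shorter, keeping the other as a remark.
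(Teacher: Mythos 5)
You have the right geometric frame---the lattice-point-free parallelogram $P(t^-,t^+)$ from Lemma~\ref{lem:empty}, translation invariance of the crossing word, and the decomposition of the diagonal from $(0,0)$ to $t$ into a piece ``like'' the segment to $t^+$, an inserted $r$ and $u$, and a piece ``like'' the translated segment to $t^-$. This is exactly the paper's setup. But the step you yourself flag as ``the heart of the matter and where I expect the main obstacle'' is precisely the step the paper proves and your proposal does not: why does the initial portion of the diagonal, from $(0,0)$ to its crossing $(a,b')$ of the vertical line $x=a$ (where $t^+=(a,b)$ and $b-1<b'<b$), produce \emph{exactly} the word $\tau(t^+)$, with no spurious or missing crossings and in the same order? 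The paper's resolution is a short homotopy argument: slide the endpoint from $(a,b')$ up to $(a,b)=t^+$ along the vertical gridline. The family of segments from $(0,0)$ to $(a,\,zb+(1-z)b')$ sweeps out a triangle contained in $P(t^-,t^+)$; since that parallelogram has no interior lattice points (and none on its boundary besides its corners), no gridline crossing is created or destroyed during the slide, so the crossing word is constant in $z$ and equals $\tau(t^+)$. A symmetric homotopy handles the terminal portion, which after translating by $-t^+$ becomes the segment to $t^-=t-t^+$. Without this (or an equivalent explicit computation via the ceiling-function formula of Lemma~\ref{tumbleseq}), the identity is asserted rather than proved.

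Your fallback induction does not repair the gap. The inductive step---``each step from the $j$th to the $(j{+}1)$st positive child of $s$ appends $ru\,\tau(t^+)$ on one end''---is itself an instance of the recurrence being proved: for $t_+=t+t^+$ one has $(t_+)^-=t$ and $(t_+)^+=t^+$, so the appending claim \emph{is} the statement $\tau(t_+)=\tau(t)\,ur\,\tau(t^+)$, and the induction reduces the lemma to itself one level down without ever supplying independent geometric input. Note also that the lemma excludes $t\in b_+(g)\cup b_-(g)$ precisely because the word-level identity fails there (e.g.\ $\tau((2,1))=r$, not $ru$), so the ``absorbed/elided letter'' convention you propose at the root's parents cannot be made to work for words in the free monoid; the paper only extends the orientation map $\sigma$ to $g^{\pm}$, where the genuine inverses $r^{-1},u^{-1}$ exist in $S_4$. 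Your observation that the two asserted factorizations are mirror images via the palindrome property is correct and harmless, but the core of the proof is the homotopy (or equivalent bookkeeping), and it is missing.
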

\begin{proof}
We prove the first equality.
The second is similar and follows from symmetry.

Without loss of generality, suppose $t$ is a positive child, so $t$ is adjacent to~$t^-$ but not to $t^+$.
As $(t^-)^+=t^+$,
by Lemma~\ref{lem:empty},
the parallelogram $P:=P(t^-,t^+)$ contains no interior lattice points.

\renewcommand{\L}[2]{L_{#1}^{#2}}
Let $t^+=(a,b)$.
Let $a',b'\in\R$ such that $(a,b')$ and $(a',b)$ lie on the geodesic line segment $L$ from $(0,0)$ to~$t$;
note that $a<a'<a+1$ and $b-1<b'<b$
(see Figure~\ref{fig:homotopy}).

For $v,v'\in\R^2$, let $\L{v}{v'}$ denote the (directed) line segment from $v$ to~$v'$ and,
by an abuse of notation,
write $\t(\L{v}{v'})$ for the sequence of the letters $r$ and $u$ corresponding to crossing a vertical and a horizontal edge, respectively,
while traversing along this directed line segment.

By breaking $\L{(0,0)}t$ at points $(a,b')$ and $(a',b)$, we get
$$\t(t)=\t(\L{(0,0)}t)=\t(\L{(0,0)}{(a,b')})\cdot r\cdot\t(\L{(a,b')}{(a',b)})\cdot u\cdot\t(\L{(a',b)}t),$$
where the $r$ and the $u$ correspond to
crossing a vertical edge at $(a,b')$ and a horizontal edge at $(a',b)$, respectively.
Now $\L{(a,b')}{(a',b)}$ crosses no edges as it is a segment inside a lattice square.
It remains to calculate the other two tumble sequences.

Consider the homotopy $F(z)=\L{(0,0)}{(a,zb+(1-z)b')}$
of line segments from $F(0)=\L{(0,0)}{(a,b')}$ to $F(1)=\L{(0,0)}{(a,b)}$.
Since there are no lattice points in the parallelogram~$P$,
and we are moving an endpoint along an edge,
$\t\circ F(z)$ is constant for $z\in[0,1]$,
and so $$\t(\L{(0,0)}{(a,b')})=\t(\L{(0,0)}{(a,b)})=\t(t^+).$$
Similarly, the homotopy $G(z)=\L{(za+(1-z)a',b)}{t}$
shows the first equality in 
$$\t(\L{(a',b)}t)=\t(\L{(a,b)}t)=\t(\L{(0,0)}{t-t^+})=\t(\L{(0,0)}{t^-})=\t(t^-),$$
where the second equality follows from invariance of $\t$ under lattice translation.
This completes the proof.
\end{proof}

\begin{figure}[hbtp]
\begin{tikzpicture}[scale=2]
\coordinate (add) at (0.1,0.1);
\coordinate (O) at (0,0);
\coordinate [label=right:{~$t=(3,2)$}] (t) at (3,2);
\coordinate [label=above left:{$t^+=(a,b)$}] (t+) at (1,1);
\draw ($(O)-(add)$) grid ($(t)+(add)$);
\draw [name path=O--t] (O) -- (t);
\path [name path=t+x] (t+) -- +(0,-1);
\path [name path=t+y] (t+) -- +(1,0);
\path [name intersections={of=O--t and t+x,by={[label=below right:{$(a,b')$}]b'}}];
\path [name intersections={of=O--t and t+y,by={[label=below right:{$(a',b)$}]a'}}];
\foreach \num in {0,25,5,75}
{
   \coordinate (z\num) at ($ (t+)!.\num!(b') $);
   \draw (O) -- (z\num) -- (t);
};
\foreach \point in {t,t+,a',b'}
   \fill [black,opacity=.7] (\point) circle (1pt);
\end{tikzpicture}
\caption{The homotopy construction in Lemma~\ref{lem:recurrence} for $t=(3,2)$, with $t^+=(a,b)=(1,1)$. \label{fig:homotopy}}
\end{figure}
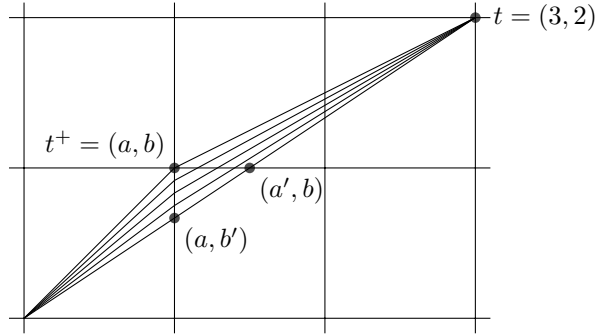


\begin{definition}
Extend $\s$ to the parents of the root $\root\in T$ by formally defining $\s(\root^+)=r\i$ and $\s(\root^-)=u\i$,
where $r=1230$ and $u=2310$ are considered as elements of~$S_4$ (see Definition~\ref{basicdefs}).
\end{definition}

\begin{rmk}
Since $\t(t)\mapsto\s(t)$ is a monoid homomorphism,
$\s(t)$ enjoys the above recurrence, restated as a corollary below.
In fact, by our judicious definition of $\s(\root^+)$ and $\s(\root^-)$,
the recurrence works for all $t\in T$, \latin{i.e.}, for $t\in b_+(\root)\cup b_-(\root)$ as well.
We choose not to do this for $\t$ for the following reason.
As $\t$ maps to words, \latin{i.e.}, elements of the free monoid, on the formal symbols $\{r,u\}$,
technically, $r\i$ and $u\i$ do not make sense in this setting.
\end{rmk}

\begin{cor}[Orientation recurrence] \label{cor:recurrence}
For $t\in T$, we have
$$ \s(t)=\s(t^+)ru\s(t^-)=\s(t^-)ur\s(t^+).$$
\end{cor}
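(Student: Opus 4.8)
The plan is to reduce the statement to the Tumble recurrence (Lemma~\ref{lem:recurrence}) wherever that lemma applies, and to dispatch the remaining ``boundary'' nodes — the two branches emanating from the root, together with the root itself — by a direct computation in~$S_4$.

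Recall that $\t(t)\mapsto\s(t)$ is a monoid homomorphism from the free monoid on $\{r,u\}$ to~$S_4$. Hence, for every $t\in T$ for which both $\t(t^+)$ and $\t(t^-)$ are defined — equivalently, for every $t\notin b_+(\root)\cup b_-(\root)$ — it suffices to apply this homomorphism to the two equalities $\t(t)=\t(t^+)ru\,\t(t^-)=\t(t^-)ur\,\t(t^+)$ of Lemma~\ref{lem:recurrence}, which immediately yields $\s(t)=\s(t^+)ru\,\s(t^-)=\s(t^-)ur\,\s(t^+)$. What is left are exactly the nodes at which one of $t^+$, $t^-$ is a parent of the root, namely $t\in b_+(\root)\cup b_-(\root)$ and $t=\root$; at these nodes $\s$ is read via the formal values $\s(\root^+)=r\i$ and $\s(\root^-)=u\i$.

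For the positive branch, I would first note that the nodes $(1,n+1)$ with $n\ge0$ are exactly the root together with its iterated positive children, that $d\big((1,n+1)\big)=n$, and that by Lemma~\ref{tumbleseq} the tumble sequence of $(1,n+1)$ is $u^{n}$ (the empty word when $n=0$), so $\s\big((1,n+1)\big)=u^{n}$. For $t=(1,n+1)$ we have $t^+=(0,1)=\root^+$, hence $\s(t^+)=r\i$, while $t^-=(1,n)$ with $\s(t^-)=u^{n-1}$ if $n\ge1$, and $t^-=(1,0)=\root^-$ with $\s(t^-)=u\i$ if $n=0$; in all cases $\s(t^-)$ is a power of~$u$, written uniformly as $u^{n-1}$ (read as $u\i$ when $n=0$). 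A one-line computation then gives
\[
  \s(t^+)\,ru\,\s(t^-)=r\i\,ru\,u^{n-1}=u^{n}=\s(t)
  \quad\text{and}\quad
  \s(t^-)\,ur\,\s(t^+)=u^{n-1}\,ur\,r\i=u^{n}=\s(t),
\]
since powers of $u$ commute; the root case is the instance $r\i\,ru\,u\i=1=u\i\,ur\,r\i$. This disposes of $b_+(\root)$ and of~$\root$.

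The negative branch $b_-(\root)$ is handled by the symmetry exchanging $r\leftrightarrow u$ and the two coordinates: its depth-$n$ node is $(n+1,1)$ with tumble sequence — and hence orientation — $r^{n}$, and the identical computation with $\s(\root^-)=u\i$ and $\s(\root^+)=r\i$ gives both equalities. Together with the homomorphism argument this covers all $t\in T$. I do not anticipate a genuine obstacle: the only point demanding care is the bookkeeping at the root and along the two exceptional branches, \latin{i.e.}, verifying that the formal values $\s(\root^+)=r\i$ and $\s(\root^-)=u\i$ are exactly what makes the recurrence telescope to $\s(t)=u^{d(t)}$ on $b_+(\root)$ and to $\s(t)=r^{d(t)}$ on $b_-(\root)$ — which is precisely why those values were introduced — so I would write out the root instance explicitly to see that both $ru$ and $ur$ collapse to the identity.
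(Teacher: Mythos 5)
Your proposal is correct and follows essentially the same route as the paper's proof: apply the monoid homomorphism $\t(t)\mapsto\s(t)$ to Lemma~\ref{lem:recurrence} for all $t\notin b_+(\root)\cup b_-(\root)$, and verify the branches of the root directly using $\s\big((1,k+1)\big)=u^k$ together with the formal values $\s(\root^+)=r\i$ and $\s(\root^-)=u\i$. The only difference is that you write out the root case and the second equality explicitly where the paper calls them ``immediate'' and ``analogous.''
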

\begin{proof}
For $t\notin b_+(\root)\cup b_-(\root)$, the recurrence formula follows immediately from that of Lemma~\ref{lem:recurrence}.
Otherwise, without loss of generality, suppose $t\in b_+(\root)$.
For $t=(1,k+1)$ the $k$th positive child of~$\root$,
the geodesic from $(0,0)$ to $(1,k+1)$ crosses $k$ horizontal edges, so $\s(t)=u^k$.
As the recurrence for $t=\root$ is immediate,
we assume $k>0$.
We have $\s(t^-)=u^{k-1}$ as above and $\s(t^+)=\s(\root^+)=r\i$ by definition,
so $$\s(t^+)ru\s(t^-)=r\i r u u^{k-1}=u^k=\s(t),$$
as desired.
The second equality is analogous.
\end{proof}

\begin{proposition}
The sequence of cube orientations associated to a branch is periodic with period at most~$4$.
\end{proposition}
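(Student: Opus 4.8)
The plan is to fix a branch $b(t)$ and analyze how the associated cube orientations behave as we move down the branch. Say $t$ is a negative child (the positive case is symmetric), so the branch $b(t) = b_-(t)$ consists of $t$ together with its successive positive children $t_+, (t_+)_+, \dotsc$; equivalently, the $k$th element of the branch after $t$ is $s_k := t + k\,t^+$ with $(s_k)^+ = t^+$ fixed along the whole branch. Applying the orientation recurrence of Corollary~\ref{cor:recurrence} with the fixed positive parent $p^* := t^+$, we get for consecutive elements $s$ and $s_+$ of the branch a relation of the form $\s(s_+) = \s((s_+)^+)\,ru\,\s((s_+)^-) = \s(p^*)\,ru\,\s(s)$. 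So passing from one branch element to the next amounts to left-multiplication in $S_4$ by the fixed element $M := \s(p^*)\,ru$. Hence the orientations along the branch form the orbit $\s(t),\ M\s(t),\ M^2\s(t),\ \dotsc$ under the left action of $\langle M\rangle \le S_4$.

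The key step is then to show that $M$ has order dividing $4$ in $S_4$. First I would identify $M$ explicitly: since $r = 1230$ and $u = 2310$ as elements of $S_4$, the product $ru$ is a specific permutation (a $3$-cycle, using the relation $(ur)^3 = 1$ stated in the paper), and $M = \s(p^*)\,ru$ is that $3$-cycle conjugated/composed with the branch's fixed parent orientation $\s(p^*)$. Rather than casework on $\s(p^*)$, the cleaner route is: the elements of $S_4$ have orders $1, 2, 3, 4$, so it suffices to rule out order $3$ — i.e., to show $M$ is not a $3$-cycle, equivalently $M^4 \ne 1$ would fail only if $M$ has order $3$. Here I would use the geometric meaning: $M$ is realized as the cube symmetry effected by the tumble word appearing between consecutive branch elements, which by Lemma~\ref{lem:recurrence} is $\t(p^*)\,ru$ (insert on the appropriate side), and I would argue this word, read as a motion of the cube, is a rotation about an axis that is not a body-diagonal axis — body-diagonal rotations (order $3$) are exactly the ones we must exclude. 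Concretely, $\t(p^*)\,ru$ corresponds to rolling the cube out and back along a closed loop in the plane that encloses... actually more simply: the recurrence increments the branch index by adding $t^+$, a primitive lattice vector, so the net motion is two rolls' worth in a fixed direction composed with a return, and one checks directly that $ru$ has order $4$ (it is the rotation taking the bottom face around by a quarter turn — indeed $ru = 1230 \cdot 2310$, compute: this is the $4$-cycle $3201$ or similar), and that left-multiplying by any fixed orientation $\s(p^*)$ conjugates within a coset whose relevant invariant (the cycle type of $M$, which controls $\langle M \rangle$) I would pin down by a short computation.

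So the concrete steps, in order: (1) parametrize the branch as $\{s_k = t + k t^+\}_{k \ge 0}$ with constant positive (or negative) parent $p^*$; (2) apply Corollary~\ref{cor:recurrence} to get $\s(s_{k+1}) = M\,\s(s_k)$ with $M = \s(p^*)\,ru$ (constant); (3) conclude $\s(s_k) = M^k \s(s_0)$, so the sequence is periodic with period equal to the order of $M$ in $S_4$, which is at most $4$ since $|S_4| = 24$ has exponent $12$ — wait, that only gives period dividing $12$; so step (4) is the real content: show $\mathrm{ord}(M) \mid 4$, i.e., $M$ is not a $3$-cycle and not order-$6$ (impossible in $S_4$ anyway) and not order $12$ (impossible), leaving only that we exclude $M$ being a $3$-cycle. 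I would do this by direct computation of $ru$ in one-line notation and observing its cycle type, then noting that $M = \s(p^*) \cdot (ru)$ ranges over a single left coset of $\langle ru \rangle$ — no, that's not right either since $\s(p^*)$ varies — so instead I would observe that along any branch, $\s(p^*)$ is \emph{one fixed element}, and then for each branch separately $M$ is a single group element whose order I bound by noting $M^2 = \s(p^*)ru\s(p^*)ru$; using $\s(t) = \s(t^+) ru\, \s(t^-)$ and $\s(t^-)$ being itself on the parent branch, an induction shows $M$ always lies in the subgroup structure forcing order $\le 4$.

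The main obstacle I anticipate is exactly step (4): cleanly proving $M = \s(p^*)\,ru$ always has order dividing $4$ without an unenlightening enumeration. The natural fix is the geometric interpretation — reinterpret $M$ as the deck transformation of the tumbling-cover associated to translating the branch geodesic by the primitive vector $t^+$, show this is a rotation of the cube about a face axis or edge axis (order $4$ or $2$) rather than a vertex axis (order $3$), because the branch's geodesics all pass near the same lattice line and the incremental motion preserves a face-pairing. If that geometric argument proves fiddly, the fallback is the finite check: there are only finitely many possible values of $\s(p^*)$ realized as parents of branches (in fact one can show $\s(t^+)$ for branch-defining $t^+$ takes only a handful of values up to symmetry), and for each, $\mathrm{ord}(\s(p^*)\,ru) \le 4$ by inspection.
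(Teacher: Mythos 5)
Your steps (1)--(3) are exactly the paper's argument: parametrize the branch as $\{t+k\,t^+\}_{k\ge 0}$ with a fixed parent, apply Corollary~\ref{cor:recurrence} to see that consecutive orientations on the branch differ by multiplication by one fixed element $M\in S_4$ (the paper uses the right-multiplication form $\sigma(t_k)=\sigma(t_{k-1})\cdot M$ while you use the left-multiplication form $\sigma(t_k)=M\cdot\sigma(t_{k-1})$; both are legitimate, coming from the two identities in the corollary), and conclude that the sequence of orientations is an orbit of $\langle M\rangle$, hence periodic with period equal to the order of $M$. You also handle the two root branches implicitly; the paper treats them separately as $\{u^k\}$ and $\{r^k\}$, a minor difference.

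The problem is your step (4), which rests on a misreading of the statement: it asserts the period is \emph{at most} $4$, not that it \emph{divides} $4$. Every element of $S_4$ has order $1$, $2$, $3$, or $4$ (the exponent of $S_4$ is $12$, but the maximal order of a single element is $4$), so the order of $M$ is automatically at most $4$ and the proof is complete at the end of your step (3) --- this one-line observation is precisely how the paper finishes. Your attempt to rule out $M$ being a $3$-cycle is therefore unnecessary (a branch of period $3$ would still satisfy the proposition), and moreover you never actually carry it out: the ``deck transformation'' heuristic and the ``finite check'' fallback are both left open, so as written your proof does not close. Delete step (4), replace it with the remark on element orders in $S_4$, and your argument coincides with the paper's.
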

\begin{proof}
The sequences of cube orientations of the two branches $b_+(\root)$ and $b_-(\root)$ starting at the root are $\{u^k\}_k$ and $\{r^k\}_k$, respectively,
and so are $4$-periodic.
Otherwise, let $t\in T$ be at depth at least~$1$ and consider the branch $b(t)$.
Without loss of generality, assume $b(t)$ is a positive branch.
Let $t_k$ be the $k$th positive child of~$t=t_0$, which are precisely the elements of $b(t)$ in sequence.
We have $$\s(t_k)=\s(t_{k-1})ru\s(t^-)=\s(t)[ru\s(t^-)]^k$$
by Corollary~\ref{cor:recurrence} and induction.
Since $ru\s(t^-)$ is an element of~$S_4$, its order is at most~$4$,
and hence the sequence $\{\s(t_k)\}_k$ is periodic with period at most~$4$.
\end{proof}

\begin{definition}
Each edge of the tree contains a parent $t$ on the left endpoint,
and either the positive child $t_+$ or the negative child $t_-$ on the right endpoint.
We label each edge, and thus each parent-child pair, by whether the child is positive or negative:
$(\s(t),\s(t_+),+)$ or $(\s(t),\s(t_-),-)$, respectively.
Each edge is thus associated to an element of $S_4 \times S_4 \times \{+,-\}$, called its \defn{label}.
\end{definition}

\begin{lem}\label{lem:children}
Each edge label uniquely determines the labels of its two child edges.
\end{lem}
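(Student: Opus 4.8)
The plan is to produce, for each of the two signs an edge can carry, explicit formulas for the labels of its two child edges purely in terms of the parent label, using nothing beyond the orientation recurrence (Corollary~\ref{cor:recurrence}) and the parent relations built into the Stern--Brocot tree. Fix a tree edge; by construction it is $(t,t_+)$ or $(t,t_-)$ for some $t\in T$, and I would treat the positive case $(t,t_+)$, whose label is $(\sigma(t),\sigma(t_+),+)$. Its two child edges are the positive edge $(t_+,(t_+)_+)$ and the negative edge $(t_+,(t_+)_-)$, with labels $(\sigma(t_+),\sigma((t_+)_+),+)$ and $(\sigma(t_+),\sigma((t_+)_-),-)$. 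The first coordinate $\sigma(t_+)$ and both signs are already visible in the parent label, so the whole lemma reduces to writing $\sigma((t_+)_+)$ and $\sigma((t_+)_-)$ as functions of $\sigma(t)$ and $\sigma(t_+)$.

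To do that I would unwind the parent relations from the definition of the tree: $(t_+)^+=t^+$ and $(t_+)^-=t$, hence $((t_+)_+)^+=t^+$, $((t_+)_+)^-=t_+$, and $((t_+)_-)^+=t_+$, $((t_+)_-)^-=t$. Applying Corollary~\ref{cor:recurrence} to the node $t_+$ gives $\sigma(t_+)=\sigma(t^+)\,ru\,\sigma(t)$, and since $\sigma$ lands in the group $S_4$ this solves for the ``hidden'' orientation $\sigma(t^+)=\sigma(t_+)\,\sigma(t)\i(ru)\i$. Substituting into the recurrence for each of the two children gives
\[ \sigma((t_+)_+)=\sigma(t^+)\,ru\,\sigma(t_+)=\sigma(t_+)\,\sigma(t)\i\,\sigma(t_+), \qquad \sigma((t_+)_-)=\sigma(t_+)\,ru\,\sigma(t), \]
so the two child labels are $\bigl(\sigma(t_+),\ \sigma(t_+)\sigma(t)\i\sigma(t_+),\ +\bigr)$ and $\bigl(\sigma(t_+),\ \sigma(t_+)\,ru\,\sigma(t),\ -\bigr)$, each a function of the parent label alone. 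The negative case $(t,t_-)$ is handled the same way: eliminating $\sigma(t^-)$ from $\sigma(t_-)=\sigma(t)\,ru\,\sigma(t^-)$ yields child labels $\bigl(\sigma(t_-),\ \sigma(t)\,ru\,\sigma(t_-),\ +\bigr)$ and $\bigl(\sigma(t_-),\ \sigma(t_-)\sigma(t)\i\sigma(t_-),\ -\bigr)$.

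I do not expect a genuine obstacle: the argument is the orientation recurrence together with group arithmetic in $S_4$. The two points needing care are the order of composition — the recurrence multiplies orientations in a fixed order, so eliminating $\sigma(t^\pm)$ must respect it — and the fact that Corollary~\ref{cor:recurrence} must be invoked at $t_+$ and at $(t_+)_\pm$ even when the parent edge sits at the very top of the tree; this is exactly the situation the extension of $\sigma$ to $\root^+$ and $\root^-$ was designed to cover, so no edge case falls outside the formulas above.
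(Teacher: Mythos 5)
Your argument is correct and is essentially the paper's proof: both reduce everything to the orientation recurrence (Corollary~\ref{cor:recurrence}) together with group arithmetic in $S_4$, and you arrive at the identical formulas $\s(t_+)=\s(t)\s(s)\i\s(t)$ and $\s(t_-)=\s(t)\,ru\,\s(s)$ for the two child labels of the edge $(\s(s),\s(t),+)$. The only cosmetic difference is that the paper obtains the same-sign child by noting that three consecutive nodes on a branch have orientations $x,xy,xy^2$, whereas you get the same expression by algebraically eliminating the hidden ancestor orientation from the recurrence.
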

\begin{proof}
Without loss of generality, consider a positive edge $(\s(s),\s(t),+)$.
Since $s$, $t$, and $t_+$ are consecutive elements on the same branch, the cube orientations are $x,xy,xy^2$ for some $x,y\in S_4$,
and hence we have $$\s(t_+)=\s(t)\s(s)\i\s(t).$$
On the other hand, as $(t_-)^+=t$ and $(t_-)^-=s$, we have $$\s(t_-)=\s(t)ru\s(s)$$
by Corollary~\ref{cor:recurrence}, as desired.
\end{proof}

\begin{definition}
Let $S=\{\s(t)\in S_4\mid t\in T\}$ be the subset of cube orientations that occur in~$T$,
and let $E\subseteq S_4\times S_4\times\{+,-\}$ be the edge labels that occur.
\end{definition}

\begin{lem} \label{lem:saturate}
We have $\abs{S}=9$ and $\abs{E}=54$.
In particular, we have $$S=\{0123, 0213, 1032, 1230, 2301, 2310, 3012, 3120, 3201\}.$$
\end{lem}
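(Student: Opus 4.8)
The plan is to treat the statement as a finite reachability computation. By Lemma~\ref{lem:children}, the label of an edge of the Stern--Brocot tree determines the labels of its two child edges; explicitly (from the proof of that lemma), a positive edge $(\s(s),\s(t),+)$ has child edges $(\s(t),\s(t)\s(s)\i\s(t),+)$ and $(\s(t),\s(t)ru\,\s(s),-)$, and symmetrically a negative edge $(\s(s),\s(t),-)$ has child edges $(\s(t),\s(t)ur\,\s(s),+)$ and $(\s(t),\s(t)\s(s)\i\s(t),-)$. Hence $E$ is precisely the closure of the set of labels of the two edges incident to $\root$ under these two child operations, and since every non-root node is the child-endpoint of exactly one tree edge and $\s(\root)=0123$, we get $S=\{0123\}\cup\{\,\text{second coordinates of elements of }E\,\}$. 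As $S_4\times S_4\times\{+,-\}$ is finite, this closure is reached in finitely many steps.

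First I would record the seed data: the root $\root=(1,1)$ has empty tumble sequence, so $\s(\root)=0123$; its positive child $(1,2)$ has $\t=u$, so $\s((1,2))=2310$; its negative child $(2,1)$ has $\t=r$, so $\s((2,1))=1230$. The two root edges thus carry labels $(0123,2310,+)$ and $(0123,1230,-)$. Then I would run a breadth-first search from these two labels, repeatedly applying the child rules above (computing the products in $S_4$, using $r^4=u^4=(r^2u)^2=(ur)^3=1$ to reduce words), adding each newly produced label until the set stabilizes. Carrying this out yields exactly $54$ edge labels, whose second coordinates, together with $0123$, are precisely the $9$ listed permutations. To see that these counts are tight---that every one of the $54$ labels and every one of the $9$ orientations actually occurs---I would observe that the search digraph restricted to these $54$ labels is connected from the two seeds, so each label is reached from a seed by a finite chain of child operations and hence labels a genuine edge of $T$.

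To keep this a hand computation rather than a black box, I would organize the search along branches: by the periodicity proposition just proved, the cube orientations along any branch cycle with period dividing $4$, so each branch contributes at most $4$ orientations and at most $8$ edge labels, while the branches themselves are generated from the two branches at $\root$ by the finitely many ``branching'' transitions supplied by Lemma~\ref{lem:children}. Tabulating the finitely many branch types together with their outgoing transitions then reduces the whole computation to a bounded table that can be displayed and checked directly.

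The main obstacle is organizational rather than conceptual: the argument is a finite verification involving on the order of a hundred products in $S_4$, and the real difficulty is presenting the list of $54$ edge labels---closed under both child maps and connected from the seeds---compactly enough to be convincing. I do not expect a shortcut that avoids enumeration: since $|S|=9$ does not divide $|S_4|=24$, the set $S$ is not a subgroup, and no obviously simpler description of it presents itself.
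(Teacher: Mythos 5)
Your proposal is correct and follows essentially the same route as the paper: seed with the two root edge labels $(0123,2310,+)$ and $(0123,1230,-)$, apply the child-label rules from Lemma~\ref{lem:children} until the set of labels stabilizes, verify closure of the resulting $54$ labels, and read off the $9$ orientations. The paper phrases the termination step as computing to depth~$7$ and checking against the appendix table rather than as a breadth-first closure, but this is the same finite verification.
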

\begin{proof}
Starting from the two root edges, we may recursively compute all child edges to some depth.
The proof of Lemma~\ref{lem:children} states clearly how to compute the edge labels of its two child edges.
Let $E'$ be the labels of edges that occur by depth~$7$.
By direct computation, we have $\abs{E}\geq\abs{E'}=54$.
Appendix~\ref{sec:saturate} lists the labels of the two child edges for each edge label in~$E'$.
One can check that all the child edge labels are in~$E'$.
This proves that $E=E'$, as desired.
Reading the cube orientations occurring in $E$ yields the $9$ orientations in~$S$.
\end{proof}
\begin{rmk}
Alternatively,
the above computation can be confirmed simply by calculating the tree to depth $8$ and observing that no new edge labels occur past depth~$7$.
\end{rmk}

Because the cube orientations $1320$, $2130$, and $3210$ are not in~$S$,
there are no geodesics ending at vertex~$0$.
\begin{corollary} \label{cor:cube-noloop}
The cube does not admit geodesic loops starting and ending at the same vertex.
\end{corollary}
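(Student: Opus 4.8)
The plan is to pass, via the cube's symmetries, to geodesics issuing from a single vertex, translate ``ends at vertex~$0$'' into a statement about cube orientations, and then read off the conclusion from the finite list of attainable orientations in Lemma~\ref{lem:saturate}.

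\emph{Reduction to vertex~$0$.} The rotation group of the cube acts transitively on the eight vertices and carries geodesics to geodesics, so it suffices to prove that no geodesic from vertex~$0$ ends at vertex~$0$. A geodesic leaving vertex~$0$ either runs along an edge at~$0$---in which case it ends at a neighbour of~$0$, not at~$0$---or it enters the interior of one of the three faces incident to~$0$. The $120^{\circ}$ rotation about the space diagonal through vertex~$0$ fixes vertex~$0$ and cyclically permutes those three faces, so in the latter case we may assume the geodesic enters the standard bottom face~$\{0,1,2,3\}$. In the unfolding picture it then leaves the origin into the open first quadrant, so by Lemma~\ref{lem:visible} it corresponds to the segment from $(0,0)$ to a visible lattice point $(a,b)$ with $a,b\ge 1$ and $\gcd(a,b)=1$---that is, to a node $t$ of the Stern--Brocot tree---with terminal cube orientation $\sigma(t)\in S$.

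\emph{Which orientations end at vertex~$0$.} When the tumble is finished the geodesic terminates at $(a,b)$, the upper-right corner of the final bottom face; in the starting position that corner held vertex~$3$, one endpoint of the space diagonal labelled~$3$. Since a rotation of the cube is determined by its action on the four space diagonals, $\sigma(t)$ determines the whole permutation of the eight vertices, hence both which diagonal now sits in diagonal~$3$'s original slot and which of that diagonal's two endpoints lies on the bottom face. Reading this off from the one-line notation (equivalently, propagating vertex labels along the tumble path with Proposition~\ref{prop:sumto7}) and keeping track of whether the relevant diagonal has been ``flipped'', one finds that the terminal vertex equals~$0$ exactly when $\sigma(t)$ is one of $1320$, $2130$, $3210$. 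By Lemma~\ref{lem:saturate} the attained set $S=\{0123,0213,1032,1230,2301,2310,3012,3120,3201\}$ contains none of these; together with the edge cases above, this shows that no geodesic from vertex~$0$ terminates at vertex~$0$, and the reduction then gives the corollary.

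\emph{Main obstacle.} The delicate point is the orientation-to-vertex step: one must nail down the one-line-notation convention for cube orientations and, above all, decide whether a given space diagonal is traversed from its lower to its upper vertex after the tumble, so as to distinguish terminating at vertex~$0$ from terminating at its antipode~$7$. Indeed the three orientations $1230$, $2310$, $3120$ that put diagonal~$0$ into the relevant slot \emph{do} all occur in~$S$, but these belong to geodesics ending at vertex~$7$; it is precisely the complementary triple $1320,2130,3210$ that is missing. The remaining ingredients---vertex-transitivity and the appeal to Lemma~\ref{lem:saturate}---are routine.
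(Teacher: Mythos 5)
Your proof is correct and follows essentially the same route as the paper: the paper's entire argument is the one-line observation that the orientations $1320$, $2130$, $3210$ corresponding to a terminal vertex~$0$ do not occur in the set $S$ of Lemma~\ref{lem:saturate}. Your write-up merely makes explicit the steps the paper leaves implicit (vertex-transitivity, restriction to the first quadrant, and the diagonal-endpoint ``flip'' that distinguishes vertex~$0$ from vertex~$7$), all of which you handle correctly.
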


Let $A$ be a $54\times54$ matrix, with rows and columns indexed by~$E$.
For $e,f\in E$, the row $e$ column $f$ entry of $A$ is given by
$$A_{ef}=\begin{cases}1/2 & \text{if $f$ is a child of $e$,} \\ 0 & \text{otherwise.}\end{cases}$$

Let $\J=(1,1,\ldots,1)$.
Note that $A\J\T=\J\T$ since each row sums to~$1$.
By direct computation (see Appendix~\ref{sec:saturate}), each column sums to~$1$, and so $$\J A=\J.$$
In other words, $\p=\tfrac{1}{54}\J$ is a stationary distribution.

\begin{lem}
Let $M=\displaystyle\lim_{k\to\oo}A^k$.
The limit exists and $M=\J\p$.
\end{lem}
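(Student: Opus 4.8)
The plan is to show that $A$, interpreted as the transition matrix of a Markov chain on the finite state space $E$ (moving from an edge to one of its two children, each with probability $1/2$), is ergodic, so that the standard Perron--Frobenius / ergodic-theory machinery applies and forces $A^k \to \mathbf{1}\pi$. Concretely, the matrix $A$ is nonnegative, every row sums to $1$ (so $A\mathbf{1}\T = \mathbf{1}\T$), and we have already observed that $\pi = \tfrac{1}{54}\mathbf{1}$ is a left eigenvector with eigenvalue $1$, i.e.\ $\pi$ is a stationary distribution. What remains is to verify the hypotheses (irreducibility and aperiodicity) that upgrade ``$1$ is an eigenvalue with a known eigenvector'' to ``$A^k$ converges to the rank-one projection $\mathbf{1}\pi$.''

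First I would establish irreducibility: for any two edge labels $e,f \in E$ there is a directed path in the ``child-of'' relation from $e$ to $f$. The cleanest route is to note that the two root edges (labels coming from $b_+(\root)$ and $b_-(\root)$) reach, by Lemma~\ref{lem:saturate}'s computation to depth $7$, every label in $E = E'$; combined with the fact that from any edge one can always descend (every edge has two children), one gets from $e$ down to some deep edge and then---using that the depth-$7$ descendants of the root edges already exhaust $E'$ and that edge labels only depend on the local data, not on absolute depth---one argues that the set of labels reachable from $e$ is a nonempty subset of $E$ closed under taking children, hence (since the whole tree's labels are generated from the root edges and every label reappears infinitely often as one descends) must be all of $E$. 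Then I would establish aperiodicity: it suffices to exhibit a single $e \in E$ that is a child of itself, or more generally a state with two return loops of coprime lengths; inspecting the child-table in Appendix~\ref{sec:saturate} for a fixed edge label (equivalently, using that branch orientation sequences are periodic with period dividing $4$, Proposition on branch periodicity, so some label recurs after $1,2$ or $4$ steps down a branch, while irreducibility supplies another return length of different parity) gives $\gcd = 1$.

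With irreducibility and aperiodicity in hand, the finite-state Markov chain convergence theorem gives that $A^k$ converges entrywise, the limit $M$ exists, every row of $M$ equals the unique stationary distribution $\pi$, and hence $M = \mathbf{1}\T\pi$---which, with the row-vector conventions of the statement, is exactly $M = \mathbf{1}\pi$. Alternatively, and perhaps more in keeping with the paper's computational style, I would diagonalize: since $A\mathbf{1}\T=\mathbf{1}\T$ and $\pi A = \pi$ with $\pi\mathbf{1}\T = 1$, the matrix $\mathbf{1}\T\pi$ is the spectral projection onto the eigenvalue-$1$ eigenspace, and it remains only to check that $1$ is a simple eigenvalue and that all other eigenvalues have modulus strictly less than $1$; both follow from irreducibility and aperiodicity (Perron--Frobenius), after which $A^k = \mathbf{1}\T\pi + (\text{contribution from eigenvalues of modulus} < 1) \to \mathbf{1}\T\pi$.

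The main obstacle is the irreducibility check. Aperiodicity and the existence/form of the limit are then routine consequences of standard theory; the one genuinely content-bearing step is confirming that the $54$-state child-relation digraph is strongly connected, and for that I would lean on the explicit table in Appendix~\ref{sec:saturate} (the same computation already used to prove $|E| = 54$) rather than attempt a conceptual argument, noting that strong connectivity can be verified mechanically from that table.
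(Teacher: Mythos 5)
Your proposal is correct and rests on the same engine as the paper's proof: the finite-state Markov chain convergence theorem (equivalently, Perron--Frobenius for a regular stochastic matrix), applied once one knows that $1$ is a simple eigenvalue and every other eigenvalue has modulus strictly less than $1$. The only real difference is where the finite verification happens. The paper simply computes the spectrum of the $54\times 54$ matrix $A$ directly and reads off both facts at once; you instead propose to verify irreducibility by checking strong connectivity of the child-relation digraph from the table in Appendix~\ref{sec:saturate} and aperiodicity by exhibiting return cycles of coprime lengths. Both are legitimate finite checks of comparable difficulty, and yours is arguably more transparent about \emph{why} the spectral conditions hold. One caution: your conceptual shortcut for irreducibility --- that the set of labels reachable from a given $e\in E$ is closed under taking children and therefore must be all of $E$ because every label recurs as one descends the tree --- is not valid as stated, since a nonempty child-closed subset could in principle be a proper absorbing set; the recurrence of every label somewhere below the root does not imply its recurrence below every occurrence of $e$. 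You correctly identify the remedy (a mechanical strong-connectivity check on the $54$-vertex digraph), and with that check in place the argument is complete.
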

\begin{proof}
By direct computation, $A$ has (left) eigenvalue $1$ with multiplicity~$1$,
so the underlying directed graph is strongly connected.
All other eigenvalues have absolute values strictly less than~$1$,
so the system is aperiodic.
The result follows.
\end{proof}

In fact, regardless of the starting state, the distribution converges to the unique stationary distribution~$\p$,
because $A$ is a regular stochastic matrix and $\p$ is a steady-state vector for $A$,
so $\p$ is the unique attracting long-term state of the system.

\begin{theorem} \label{thm:proportions}
In the limit, the proportions for the cube orientations and associated vertices are as shown in the following table.
Vertex $7$ thus has a frequency of $12/54$.

\begin{table}[h]
\begin{tabular}{|c|c|c|cl}
\cline{1-3}
 $\s$ &  vertex & frequency &  &  \\ \cline{1-3}
3201 & 1 & 8/54 &  &  \\ \cline{1-3}
3012 & 2 & 8/54 &  &  \\ \cline{1-3}
0123 & 3 & 6/54 &  &  \\ \cline{1-3}
0213 & 4 & 8/54 &  &  \\ \cline{1-3}
1032 & 5 & 6/54 &  &  \\ \cline{1-3}
2301 & 6 & 6/54 &  &  \\ \cline{1-3}
1230 & 7 & 4/54 &  &  \\ \cline{1-3}
2310 & 7 & 4/54 &  &  \\ \cline{1-3}
3120 & 7 & 4/54 &  &  \\ \cline{1-3}
\end{tabular}
\end{table}
\end{theorem}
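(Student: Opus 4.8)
The plan is to turn the identity $M=\J\p$, $\p=\tfrac1{54}\J$, into a statement about how the depth-$d$ nodes of the Stern--Brocot tree distribute their cube orientations, and then read the table off two finite tabulations. For $d\ge1$, let $v_d$ be the row vector indexed by $E$ whose $e$-entry is the fraction of the $2^d$ edges arriving at depth-$d$ nodes that carry label $e$; this is a probability vector supported on $E$ by Lemma~\ref{lem:saturate}. Every edge of $T$ has exactly two child edges, and by Lemma~\ref{lem:children} the pair of their labels is determined by the label of the parent edge alone; pushing this through the counting gives the transfer relation $v_{d+1}=v_dA$, hence $v_d=v_1A^{d-1}$ for all $d\ge1$. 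Since $A^{d-1}\to M$ by the preceding lemma, every row of $M$ equals $\p$, and $v_1$ sums to $1$, we conclude $v_d\to\p=\tfrac1{54}\J$. Thus, at depth $d$, each edge label in $E$ occurs with limiting frequency $1/54$.

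Next, observe that a node $t$ at depth $d\ge1$ is the child endpoint of a unique edge, and the middle coordinate of that edge's label is precisely $\s(t)$. Hence the fraction of the $2^d$ depth-$d$ nodes (equivalently, of the $2^d$ vertex-to-vertex geodesics they index) with cube orientation a fixed $\s\in S$ converges, as $d\to\infty$, to $\tfrac1{54}\cdot\#\{e\in E:\s\text{ is the middle coordinate of }e\}$. Counting middle coordinates across the $54$ edge labels recorded in Appendix~\ref{sec:saturate} yields the multiplicities $8,8,6,8,6,6,4,4,4$ attached to the nine orientations of $S$, which sum to $54$. Finally, to attach a vertex to each $\s\in S$, apply the permutation $\s\in S_4$ to the cube in its starting position and read off which labelled vertex sits at the upper-right corner of the bottom face; using $r=1230$, $u=2310$ and the vertex labels of Definition~\ref{basicdefs}, this routine check assigns vertex $3$ to $0123$, vertex $7$ to each of $1230$, $2310$, $3120$, and vertices $1,2,4,5,6$ to $3201,3012,0213,1032,2301$. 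Assembling the multiplicities with the vertex assignments produces the table, and summing the three orientations that map to vertex $7$ gives its frequency $12/54$.

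Essentially all of the substance is already in the preceding lemma (that $A^k\to\J\p$, that is, that $A$ is an irreducible aperiodic stochastic matrix whose stationary distribution is uniform); what remains is bookkeeping, and the one point to treat carefully is the passage from edges to edge labels. Since distinct edges of $T$ may share a label, the entry $A_{ef}=1/2$ must be read as ``$f$ is one of the two child labels determined by $e$'', and for the transfer relation $v_{d+1}=v_dA$ to hold on the nose one should check, as the appendix does, that no edge label has its two child labels equal. Granting this, the transfer relation together with the two tabulations gives the stated proportions.
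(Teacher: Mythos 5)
Your proposal is correct and follows essentially the same route the paper intends: the theorem is stated without a separate proof precisely because it is the combination of the preceding lemma ($A^k\to\J\p$ with $\p=\tfrac1{54}\J$), the depth-wise transfer relation implicit in Corollary~\ref{ways} ($\b A^{k-1}\sv$), and a count of the $54$ edge labels in Appendix~\ref{sec:saturate} by child orientation, which is exactly what you carry out. Your one point of care --- that the two child labels of an edge label must be distinct for $A_{ef}=1/2$ to encode the transfer correctly --- is automatically satisfied since one child is positive and the other negative, so the labels differ in their third coordinate.
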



\renewcommand{\b}{\mathbf{b}}
\newcommand{\sv}{\mathbf{s}_v}
Let $\b$ be the $1\times 54$ row vector indexed by $E$,
with a $0$ in each entry except for a $1/2$ in the two entries corresponding to the edges between the root $\root$ and its two children.
For the $7$ vertices $v\in\{1,\ldots,7\}$, let $\sv$ be the $54\times 1$ column vector indexed by $E$,
with a $0$ in each entry except for a $1$ in the entry corresponding to edge labels whose child cube orientation corresponds to vertex~$v$.
\begin{cor} \label{ways}
At depth $k\geq1$ in the tree~$T$,
the frequency of vertex $v$ is given by the number $\b A^{k-1}\sv$.
The number of ways of getting there is the number $(2\b)(2A)^{k-1}\sv$.
\end{cor}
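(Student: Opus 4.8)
The plan is to track, depth by depth, how the nodes of the Stern--Brocot tree~$T$ distribute among the edge labels in~$E$, and then to read off how many belong to a given vertex~$v$. The basic bookkeeping device is this: for $k\geq1$, every node at depth~$k$ is the child endpoint of a unique tree edge joining it to its parent at depth~$k-1$; call these the \emph{level-$k$ edges}. There are exactly $2^k$ of them, and each carries a label in~$E$.

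First I would introduce the row vector $c_k$, indexed by~$E$, whose $e$-entry is the number of level-$k$ edges with label~$e$; its entries are nonnegative integers summing to~$2^k$. The base case is $c_1=2\b$, since the two level-$1$ edges are the edges from $\root$ to $\root_+$ and from $\root$ to $\root_-$ (one positive, one negative, hence with distinct labels), which are precisely the two edges singled out in the definition of~$\b$. Next I would establish the recurrence $c_{k+1}=c_k(2A)$. Because every entry of $A$ is $0$ or $\tfrac12$ and every row of $A$ sums to~$1$, each $e\in E$ has exactly two \emph{distinct} child labels, so $2A$ is exactly the $0/1$ incidence matrix sending each label to its two child labels; that these children depend only on~$e$ and again lie in~$E$ is the content of Lemmas~\ref{lem:children} and~\ref{lem:saturate}. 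Each level-$(k+1)$ edge is the child of a unique level-$k$ edge, and each level-$k$ edge with label~$e$ produces exactly the two level-$(k+1)$ edges carrying the two child labels of~$e$; summing over all level-$k$ edges gives $(c_{k+1})_f=\sum_{e\in E}(c_k)_e\,(2A)_{ef}$, i.e.\ $c_{k+1}=c_k(2A)$. Iterating from the base case yields $c_k=(2\b)(2A)^{k-1}$.

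Finally I would extract the two stated formulas. A depth-$k$ node ``reaches vertex~$v$'' precisely when the child cube orientation recorded by its level-$k$ edge is one of the orientations associated to~$v$ in the table of Theorem~\ref{thm:proportions}; by the definition of~$\sv$, the number of such nodes is therefore $c_k\sv=(2\b)(2A)^{k-1}\sv$, which is the claimed number of ways of getting to~$v$. The frequency of~$v$ at depth~$k$ is this number divided by the total $2^k$ of depth-$k$ nodes, and since $(2\b)(2A)^{k-1}=2^k\,\b A^{k-1}$ that quotient is exactly $\b A^{k-1}\sv$, as asserted. As a sanity check, letting $k\to\oo$ and invoking the convergence $A^{k}\to M$ proved above recovers the limiting frequencies of Theorem~\ref{thm:proportions}.

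I do not anticipate a real obstacle: the corollary is essentially an exercise in organizing facts already in hand. The only point demanding care is the verification of $c_{k+1}=c_k(2A)$ --- one must check that distinct level-$(k+1)$ edges are never conflated and that no child label is counted with the wrong multiplicity --- and this is settled by the structural fact that each row of~$A$ has exactly two nonzero entries, each equal to~$\tfrac12$, so the identification of $2A$ with an honest $0/1$ incidence matrix is exact.
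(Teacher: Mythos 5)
Your proof is correct, and it follows exactly the route the paper intends: the paper states this corollary without proof, having set up $\b$, $A$, and $\sv$ precisely so that $(2\b)(2A)^{k-1}$ records the census of edge labels at depth $k$ and division by $2^k$ gives the frequency. Your one point of care --- that each edge label has exactly two \emph{distinct} child labels, so that $2A$ is an honest $0/1$ incidence matrix --- is indeed the only thing to check, and it holds because the two children of any edge are one negative and one positive edge, hence distinct in $S_4\times S_4\times\{+,-\}$.
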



\begin{remark}
Theorem \ref{thm:proportions} shows the frequencies of vertices at a given depth in the Stern--Brocot tree.
This is reasonably well-correlated with the lengths of trajectories from the origin:\
experimentally, we computed the frequencies of end vertex labels on lattice points over large square patches of the first quadrant with a corner at the origin and obtained the same proportions.
\end{remark}

\section{Face-to-vertex paths on the cube} \label{sec:f2v}
In the previous section, we considered vertex-to-vertex geodesics.
The more general question that we seek to answer is:
Given a point $p$ on a face of the cube and a vertex~$v$,
describe all possible geodesics from $p$ to~$v$.
We do not consider paths that pass through vertices before reaching~$v$.

\subsection{Visibility of vertices in the square lattice}
Given an initial point $p=(x,y)\in[0,1)^2$ in the starting square
and an ending vertex $(n,m)\in\Z_+^2$,
the problem is arguably split into two parts: visibility and vertex determination.
We need to know if $(n,m)$ is in the line of sight of $(x,y)$ before hitting another vertex and,
if so, what vertex is on $(n,m)$ following the tumble path containing the segment starting at $(x,y)$ and ending at $(n,m)$.

For visibility, there is a simple characterization.
Recall Definition~\ref{denominator} of the \defn{denominator} $\denom x$.
Setting $\mathbf{u}=(1,0)$ and $\mathbf{v}=(0,1)$, Lemma~\ref{lem:denom} becomes the following.
\begin{lemma} \label{latticevisibility}
The point $(n,m)$ is visible from $(x,y)$ if and only if $\denom{\frac{m-y}{n-x}}\leq\frac1n$.
\end{lemma}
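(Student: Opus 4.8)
The plan is to recognize Lemma~\ref{latticevisibility} as the special case of Lemma~\ref{lem:denom} obtained by setting $\u=(1,0)$ and $\v=(0,1)$, so that the triangular lattice $\Lambda$ of Section~\ref{sec:tetrahedron} becomes the square lattice $\Z^2$. Under this substitution, a point $p=x\u+y\v$ becomes $(x,y)$, a lattice point $q=a\u+b\v$ becomes $(a,b)=(n,m)$, and the visibility criterion $\denom{\frac{b-y}{a-x}}\le\frac1a$ becomes exactly $\denom{\frac{m-y}{n-x}}\le\frac1n$. So formally the lemma is an instantiation of an already-proved result and needs essentially no new argument.

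However, for the reader's convenience I would give a short self-contained justification rather than merely citing the substitution, since the geometry is cleaner in the square lattice. First I would recall that $(n,m)$ is visible from $(x,y)$ iff the open segment from $(x,y)$ to $(n,m)$ contains no point of $\Z^2$. If such a lattice point $(r,s)$ exists, then $(n,m)-(r,s)$ and $(n,m)-(x,y)$ are parallel, so $\frac{m-s}{n-r}=\frac{m-y}{n-x}$; writing $d=n-r$, the constraint $0<r<n$ (which holds because $(r,s)$ is strictly between, and $n>x\ge0$) gives $0<d<n$, and since $\frac{m-y}{n-x}=\frac{m-s}{d}$ with $m-s\in\Z$, we get $\denom{\frac{m-y}{n-x}}\ge\frac1d>\frac1n$. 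Conversely, if $\denom{\frac{m-y}{n-x}}=\frac1d>\frac1n$, set $c=d\cdot\frac{m-y}{n-x}\in\Z$; then $(n-d,m-c)\in\Z^2$ lies strictly on the segment, so $(n,m)$ is not visible. This is precisely the argument in the proof of Lemma~\ref{lem:denom} transcribed to $\Z^2$.

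The only point needing a little care — and hence the ``main obstacle,'' though it is minor — is making sure the reduction genuinely applies: in Lemma~\ref{lem:denom} one assumes $a>x$ and $b\ge y$ (the wedge reduction from $\alpha\in(-\pi/3,\pi/3]$ on the tetrahedron), whereas Lemma~\ref{latticevisibility} is stated for general $(n,m)\in\Z_+^2$ and $(x,y)\in[0,1)^2$. I would note that since $x,y\in[0,1)$ and $n,m\ge1$, we automatically have $n>x$ and $m>y\ge y$, so the hypotheses of Lemma~\ref{lem:denom} are satisfied, and the criterion transfers verbatim. (If one wanted the fully general statement covering all geodesic directions from $p$, one would observe that the square lattice has a $\frac\pi2$ rotational symmetry together with reflection, so any segment can be brought into the first-quadrant wedge, swapping the roles of $n$ and $m$ — but for the stated hypotheses this is unnecessary.) Thus the proof is essentially one sentence: apply Lemma~\ref{lem:denom} with $\u=(1,0)$, $\v=(0,1)$, noting the hypotheses hold since $n\ge1>x$ and $m\ge1>y$.
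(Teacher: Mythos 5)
Your proposal is correct and matches the paper exactly: the paper introduces this lemma with the single line ``Setting $\mathbf{u}=(1,0)$ and $\mathbf{v}=(0,1)$, Lemma~\ref{lem:denom} becomes the following,'' which is precisely your reduction, and your transcribed argument is the proof of Lemma~\ref{lem:denom} verbatim in the square lattice. Your added check that $n\ge1>x$ and $m\ge1>y$ guarantee the hypotheses of Lemma~\ref{lem:denom} is a small but worthwhile bit of care that the paper leaves implicit.
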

If $(x,y)=(0,0)$, the condition is that the fraction $\frac mn$ is reduced.
If $x\in\Q$ and $y\notin\Q$, then the fraction $\frac{m-y}{n-x}$ is irrational hence the denominator is~$0$.
As such, we have the following special cases:
\begin{enumerate}
\item If $(x,y)=(0,0)$ is at the origin, $(n,m)$ is visible if and only if $\gcd(n,m)=1$.
\item If $x\in\Q$ and $y\notin\Q$, or vice versa, $(n,m)$ is visible.%
\footnote{Even if $x$ and $y$ are incommensurable, \latin{i.e.}, $x/y\notin\Q$, $(n,m)$ could be blocked.  For example, for $(x,y)=(\sqrt3/3,\sqrt3/9+1/3)$, the lattice point $(n,m)=(5,2)$ is blocked by $(2,1)$, but $y/x=\sqrt3/3+1/3\notin\Q$.}
\end{enumerate}
From a generic starting point, all lattice points are visible.

\subsection{Constructing the tumble sequence for a face-to-vertex geodesic}

\begin{definitions}
For a lattice point $q=(n,m)$ with $0\leq n\leq m$,
let $c(n,m)$ denote the convex quadrilateral whose vertices are $(1,0)$, $(0,0)$, $(0,1)$, and $(n,m)$,
and let $P(n,m)=\big(c(n,m)\cap\Z^2\big)\bs\{(0,0),(n,m)\}$
be the set of lattice points in $c(n,m)$ with two points removed.

Associate to each lattice point $p=(x,y)\in P(n,m)$ 
the line $\segment$ through $p$ and~$q$.
Let $\l_0,\l_1,\dotsc,\l_{t+1}$ be these lines such that 
their corresponding slopes
are in increasing order.
Note that a line $\l_i$ may contain multiple lattice points.
For $i=0,\ldots,t$, let $z_i=z_i(n,m)$ be the region of $[0,1]^2$ strictly between $\ell_i$ and~$\ell_{i+1}$.


We define tumble sequences $\tau_i=\tau_i(n,m)$ inductively for $i=0,1,\dotsc,t$.
Let $\tau_0=\tau_0(n,m)$ be the tumble sequence consisting of $n+m-2$ letters
with a $u$ in each position $u_j=n+m-j-\lceil \frac{n j}{m-1} \rceil$ for $j=1,2,\dotsc,m-1$
and an $r$ in each remaining position,
as in Lemma~\ref{tumbleseq}.

Having defined $\tau_{i-1}$, we obtain $\tau_i$ from it as follows.
For each lattice point $(x,y)\in P(n,m)$ on the line $\ell_i$,
replace the $ur$ in positions $x+y-1$ and $x+y$ with $ru$.
Call the resulting sequence~$\tau_i$.
\end{definitions}

\begin{proposition} \label{p2vsequence}
For any point in the region $z_i(n,m)$, the tumble sequence to the point $(n,m)$ is $\tau_i$.
\end{proposition}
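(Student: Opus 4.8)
The plan is to follow the geometry of the sweep of lines through the point $q=(n,m)$, exactly as set up in the definitions preceding the statement, and to track how the tumble sequence changes as the starting point $p$ moves across one of the ``blocking lines'' $\ell_i$. First I would observe that for a point $p$ in the region $z_i(n,m)$, the lattice point $q$ is visible from $p$: the segment $\overline{pq}$ does not pass through any lattice point, precisely because all lattice points of $P(n,m)$ that could lie on $\overline{pq}$ lie on one of the lines $\ell_0,\dots,\ell_{t+1}$, and $p$ lies strictly between $\ell_i$ and $\ell_{i+1}$, so $\overline{pq}$ avoids all of them (and lattice points outside $c(n,m)$ cannot lie on a segment from $[0,1]^2$ to $q$ by convexity). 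Hence there is a well-defined tumble sequence for each such $p$, and by continuity it is constant on the connected region $z_i(n,m)$ — crossing a grid line only changes which letter is recorded, and the recorded word changes only when the segment's combinatorial type changes, which happens exactly when $p$ crosses one of the $\ell_i$. So it suffices to (a) verify the base case $i=0$ and (b) verify that crossing $\ell_i$ (from $z_{i-1}$ into $z_i$) effects precisely the stated local edit to the tumble word.

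For the base case, when $p$ lies in $z_0(n,m)$ — the region below the line $\ell_1$ of smallest slope among the blocking lines, i.e.\ just above $\ell_0$, the segment from $(1,0)$ (or from the side $x=1$) — the segment $\overline{pq}$ is homotopic, through segments meeting no lattice point, to the segment from $(1,0)$ to $(n,m)$, hence to the translate from $(0,0)$ to $(n-1,m)$ after a unit shift; more simply, its sequence of grid crossings is the same as that of the straight segment from the origin to $(n,m)$ with the first step being an $r$, which is exactly the line-drawing sequence of Lemma~\ref{tumbleseq}. I would make this precise with the same homotopy argument as in Lemma~\ref{lem:recurrence}: sliding $p$ within $z_0$ and then down to the corner of the appropriate lattice square does not change $\tau(\overline{pq})$, so $\tau_0$ as defined (the Bresenham-type word with $u$'s in positions $n+m-j-\lceil nj/(m-1)\rceil$) is correct.

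For the inductive step, suppose $\tau_{i-1}$ is the tumble sequence for $z_{i-1}(n,m)$, and let $p$ move across the line $\ell_i$ into $z_i(n,m)$. Each lattice point $(x,y)\in P(n,m)$ on $\ell_i$ sits on the segment $\overline{pq}$ exactly when $p$ is on $\ell_i$; for $p$ just below $\ell_i$ the segment passes just above $(x,y)$, and for $p$ just above $\ell_i$ it passes just below $(x,y)$. In terms of grid crossings, passing ``above'' the lattice point $(x,y)$ means the segment first crosses the horizontal line $Y=y$ and then the vertical line $X=x$, recording $ur$ in positions $x+y-1$ and $x+y$; passing ``below'' means it first crosses $X=x$ then $Y=y$, recording $ru$ there. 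Since distinct lattice points on $\ell_i$ occupy disjoint pairs of positions in the word (their values of $x+y$ are distinct, as they are collinear with $q$ and hence in strictly increasing $x$ and $y$), these edits do not interfere, and the net effect is exactly ``replace $ur$ by $ru$ in positions $x+y-1,\,x+y$ for each $(x,y)\in P(n,m)\cap\ell_i$,'' which is the definition of $\tau_i$.

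The main obstacle is the bookkeeping in the inductive step: one must be sure that for a lattice point $(x,y)$ lying on $\ell_i$, the tumble word for a nearby $p$ really does have the block $ur$ (resp.\ $ru$) sitting in positions $x+y-1$ and $x+y$ and nowhere else relevant — i.e.\ that the $i$-th edit is applied to a word that genuinely contains $ur$ at those slots, and that applying it yields a legitimate geodesic tumble word. This requires checking that the lattice points of $P(n,m)$ on a single line $\ell_i$ are ``independent'' (their position-pairs are disjoint, which follows from collinearity with $q$), and that once we have swept past $\ell_i$ the configuration near $(x,y)$ has stabilized. I would handle both by the homotopy/monotonicity argument above: as $p$ sweeps monotonically (say along a short transversal) from $z_{i-1}$ to $z_i$, the only combinatorial change is the simultaneous swap of $ur\to ru$ at the positions dictated by the lattice points of $\ell_i$, because the slope of $\overline{pq}$ varies monotonically and each grid line is crossed in an order that flips exactly when $p$ passes the corresponding $\ell_i$. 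Everything else — that distinct lines $\ell_i$ contribute disjoint edits, that $\tau_t$ corresponds to $p$ in the last region $z_t$ adjacent to the side $y=1$, and that the total count $t+1$ of regions matches the number of gaps between consecutive blocking-line slopes — is then routine.
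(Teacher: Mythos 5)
Your proposal takes essentially the same route as the paper's (much terser) proof: sweep the segment $\overline{pq}$ as $p$ moves across the blocking lines, observe that the crossing word is locally constant on each connected region $z_i$ by a homotopy argument, and check that passing a lattice point $(x,y)$ on $\ell_i$ swaps the $ur$ in positions $x+y-1$ and $x+y$ to $ru$; your visibility argument, the disjointness of the position-pairs for collinear lattice points, and the above/below bookkeeping at a swept vertex are all correct and in fact more careful than what the paper records. The one slip is in the base case: since the $\ell_i$ are ordered by increasing slope, $\ell_0$ is the line through $(0,1)$ and $z_0$ abuts the corner $(0,1)$, not $(1,0)$ (which bounds the last region $z_t$), so sliding $p$ to the corner lands on the translate of the segment from $(0,0)$ to $(n,m-1)$ with a degenerate initial $u$-crossing prepended --- consistent with $\tau_0$ beginning with $u$ (e.g.\ $\tau_0(5,3)=urrurr$) --- rather than on the segment to $(n-1,m)$ with an initial $r$; this is an orientation error that propagates no further once corrected.
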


\begin{proof}
The above construction sweeps out all the geodesics through $(n,m)$ that lie between the geodesic through $(0,1)$ and the geodesic through $(1,0)$. The tumble sequence changes exactly when the geodesic passes through a vertex, and the change in the tumble sequence is to change $ur$ to $ru$ in the place in the tumble word corresponding to where the geodesic passed across the vertex.
\end{proof}

\begin{example} \label{53ex}
We calculate the tumble sequences for the point $(5,3)$ (see Figure~\ref{anypoint}). 
By definition, we have $P(5,3)=\{(0,1),(1,1),(3,2),(2,1),(1,0)\}$,
where $(0,1)$ is on line $\ell_0$, $(1,1)$ and $(3,2)$ are on line $\ell_1$,
$(2,1)$ is on line $\ell_2$, and $(1,0)$ is on line $\ell_3$.
The lines separate $[0,1]^2$ into three regions $z_0$, $z_1$, and~$z_2$.

The tumble sequence $\tau_0(5,3)$ has $5+3-2=6$ letters.
The location of the $u$s are given by
$u_1=5+3-1-\lceil \frac{5\cdot1}{3-1} \rceil=4$ and $u_2=5+3-2-\lceil \frac{5\cdot2}{3-1} \rceil=1$,
so we have $$\tau_0(5,3)=urrurr.$$
Next, note that on line $\ell_1$, there are two lattice points: $(1,1)$ and $(3,2)$.
To obtain $\tau_1$ from $\tau_0$,
swap the underlined letters $ur$ ending at positions $1+1=2$ and $3+2=5$:
$$\tau_0(5,3)=\underline{ur}r\underline{ur}r \implies \tau_1(5,3)=\underline{ru}r\underline{ru}r.$$
Finally, note that on line $\ell_2$, there is one lattice point: $(2,1)$.
To obtain $\tau_2$ from $\tau_1$,
swap the underlined letters $ur$ ending at position $2+1=3$:
$$\tau_1(5,3)=r\underline{ur}rur\implies \tau_2(5,3)=r\underline{ru}rur.$$
The tumble sequence for every point in $z_0(5,3)$ is $urrurr$; the tumble sequence for every point in $z_1(5,3)$ is $rurrur$, and the tumble sequence for every point in $z_2(5,3)$ is $rrurur$.

It is easy to check in Figure~\ref{anypoint} that these are correct.
\end{example}

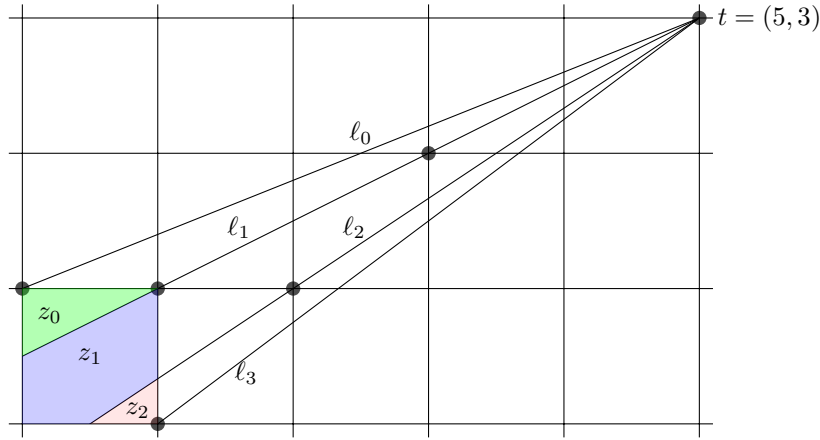
\begin{figure}[hbtp] 
\begin{tikzpicture}[scale=1.8]
\coordinate (add) at (0.1,0.1);
\coordinate (O) at (0,0);
\coordinate [label=right:{~$t=(5,3)$}] (t) at (5,3);
\coordinate (A) at (0,1);
\coordinate (B) at (1,1);
\coordinate (C) at (3,2);
\coordinate (D) at (2,1);
\coordinate (E) at (1,0);

\draw ($(O)-(add)$) grid ($(t)+(add)$);
\foreach \point in {t,A,B,C,D,E}
   \fill [black,opacity=.7] (\point) circle (1.5pt);

\clip (0,0) rectangle (5,3);
\draw (t) -- node[above] {$\ell_0$} (A);
\draw (t) -- node[above] {$\ell_1$} ($ (t) ! 1.7 ! (B) $); 
\draw (t) -- node[above] {$\ell_2$} ($ (t) ! 1.7 ! (D) $);
\draw (t) -- node[right, very near end] {$\ell_3$} ($ (t) ! 1 ! (E) $);

\clip (0,0) rectangle (1,1);
\fill [green,opacity=.3] (A) -- (t) -- ($ (t) ! 1.7 ! (B) $);
\fill [blue,opacity=.2] ($ (t) ! 1.7 ! (D) $) -- (t) -- ($ (t) ! 1.7 ! (B) $);
\fill [red,opacity=.1] ($ (t) ! 1.7 ! (D) $) -- (t) -- (E);
\node at (0.2,0.8) {$z_0$};
\node at (0.5,0.5) {$z_1$};
\node at (0.85,0.1) {$z_2$};
\end{tikzpicture}
\begin{quote}\caption{Computing the tumble paths to $(5,3)$. \label{anypoint}} \end{quote}
\end{figure}

\section{Further questions}

For the regular tetrahedron, Theorem~\ref{thm:ptet} gives a description of all the directions, from an arbitrary starting point, that end at a given vertex of the tetrahedron.
We did not do this for the cube, which is a natural next step.
The main ingredient missing is a result analogous to Lemma~\ref{lem:label}, assigning vertex labels to lattice points for arbitrary starting points.
Solving this problem for the other regular polyhedra (octahedron, icosahedron, and dodecahedron) are also natural extensions of this work. 

Another natural direction to extend our work on the cube is to consider geodesics on general rectangular boxes, which do not tile the plane by unfolding.

\subsection*{Acknowledgements} Our interest in this problem began at the American Mathematical Society's Mathematics Research Communities workshop on Discrete and Computational Geometry in June 2012 at Snowbird. We thank Satyan Devadoss, Vida Dujmovic, Joseph O'Rourke, and Yusu Wang for organizing this workshop, and especially Joseph O'Rourke for introducing us to this problem. Travel funding for additional collaborations, during which we continued our work on this problem, was provided by the American Mathematical Society.

{\footnotesize \bibliography{thesis}}

\appendix
\section{Details for Lemma~\ref{lem:saturate}} \label{sec:saturate}
Each row of the table below contains the child edge labels of an edge label in~$E'$.

\begin{displaymath}
\small
\begin{array}{ccc}
(\s(s),\s(t),s)\in E' & (\s(t),\s(t_-),-) & (\s(t),\s(t_+),+)\\
\hline
(0123,0213,-) & (0213,0123,-) & (0213,3201,+) \\
(0123,0213,+) & (0213,3012,-) & (0213,0123,+) \\
(0123,1230,-) & (1230,2301,-) & (1230,0213,+) \\
(0123,2310,+) & (2310,0213,-) & (2310,1032,+) \\
(0123,3012,+) & (3012,2310,-) & (3012,2301,+) \\
(0123,3201,-) & (3201,1032,-) & (3201,1230,+) \\
(0213,0123,-) & (0123,0213,-) & (0123,3012,+) \\
(0213,0123,+) & (0123,3201,-) & (0123,0213,+) \\
(0213,1032,+) & (1032,2310,-) & (1032,3120,+) \\
(0213,1230,-) & (1230,3201,-) & (1230,0123,+) \\
(0213,2301,-) & (2301,3120,-) & (2301,1230,+) \\
(0213,2310,+) & (2310,0123,-) & (2310,3012,+) \\
(0213,3012,-) & (3012,2310,-) & (3012,2301,+) \\
(0213,3201,+) & (3201,1032,-) & (3201,1230,+) \\
(1032,0213,-) & (0213,2301,-) & (0213,2310,+) \\
(1032,2310,-) & (2310,0123,-) & (2310,3012,+) \\
(1032,3012,-) & (3012,1032,-) & (3012,0213,+) \\
(1032,3012,+) & (3012,3201,-) & (3012,1032,+) \\
(1032,3120,+) & (3120,3012,-) & (3120,2301,+) \\
(1032,3201,+) & (3201,3120,-) & (3201,0123,+) \\
(1230,0123,+) & (0123,0213,-) & (0123,3012,+) \\
(1230,0213,+) & (0213,0123,-) & (0213,3201,+) \\
(1230,2301,-) & (2301,3012,-) & (2301,3201,+) \\
(1230,3201,-) & (3201,0213,-) & (3201,2301,+) \\
(2301,0213,+) & (0213,1230,-) & (0213,1032,+) \\
(2301,1230,+) & (1230,3201,-) & (1230,0123,+) \\
(2301,3012,-) & (3012,0123,-) & (3012,3120,+) \\
(2301,3120,-) & (3120,1032,-) & (3120,3201,+) \\
(2301,3201,-) & (3201,2301,-) & (3201,3012,+) \\
(2301,3201,+) & (3201,0213,-) & (3201,2301,+) \\
(2310,0123,-) & (0123,3201,-) & (0123,0213,+) \\
(2310,0213,-) & (0213,3012,-) & (0213,0123,+) \\
(2310,1032,+) & (1032,3012,-) & (1032,3201,+) \\
(2310,3012,+) & (3012,1032,-) & (3012,0213,+) \\
(3012,0123,-) & (0123,1230,-) & (0123,2310,+) \\
(3012,0213,+) & (0213,2301,-) & (0213,2310,+) \\
(3012,1032,-) & (1032,3012,-) & (1032,3201,+) \\
(3012,1032,+) & (1032,0213,-) & (1032,3012,+) \\
(3012,2301,+) & (2301,3120,-) & (2301,1230,+) \\
(3012,2310,-) & (2310,0213,-) & (2310,1032,+) \\
(3012,3120,+) & (3120,1032,-) & (3120,3201,+) \\
(3012,3201,-) & (3201,3120,-) & (3201,0123,+) \\
(3120,1032,-) & (1032,0213,-) & (1032,3012,+) \\
(3120,2301,+) & (2301,3201,-) & (2301,0213,+) \\
(3120,3012,-) & (3012,3201,-) & (3012,1032,+) \\
(3120,3201,+) & (3201,2301,-) & (3201,3012,+) \\
(3201,0123,+) & (0123,1230,-) & (0123,2310,+) \\
(3201,0213,-) & (0213,1230,-) & (0213,1032,+) \\
(3201,1032,-) & (1032,2310,-) & (1032,3120,+) \\
(3201,1230,+) & (1230,2301,-) & (1230,0213,+) \\
(3201,2301,-) & (2301,3201,-) & (2301,0213,+) \\
(3201,2301,+) & (2301,3012,-) & (2301,3201,+) \\
(3201,3012,+) & (3012,0123,-) & (3012,3120,+) \\
(3201,3120,-) & (3120,3012,-) & (3120,2301,+) \\
\end{array}
\end{displaymath}

\end{document}